\def\header{Universal Functions \hfill}
\def\al{\alpha}
\def\be{\beta}
\def\cc{{\mathfrak c}}
\def\comb(#1,#2){ \left( \begin{array}{c} #1 \\ #2  \end{array} \right) }
\def\ctbl(#1){{{Ctbl}(#1)}}
\def\cube{{\mathcal C}}
\def\de{\delta}
\def\dom(#1){dom(#1)}
\def\fin(#1){{{Fin}(#1)}}
\def\ga{\gamma}
\def\ka{\kappa}
\def\la{\langle}
\def\om{\omega}
\def\pair(#1,#2){\la #1, #2 \ra}
\def\picube(#1){{\bf \Pi}^0_{#1}(\cube)}
\def\pirec(#1){{\bf \Pi}^0_{#1}(\rec)}
\def\Poset{{\mathbb P}}
\def\poset{{\mathbb P}}
\def\pow(#1){{\mathcal P}(#1)}
\def\pp{\mathfrak p}
\def\proof{\par\noindent Proof\par\noindent}
\def\pr{\prime}
\def\qed{\par\noindent QED\par\bigskip}
\def\Rationals{{\mathbb Q}}
\def\ra{\rangle}
\def\rcantorpar{{(2^\om)}}
\def\rcantor{{2^\om}}
\def\Reals{{2^\om}}
\def\rec{{\mathcal R}}
\def\res{\mathord{\upharpoonright}}
\def\rmand{\mbox{ and }}
\def\rmforall{\mbox{ for all }}
\def\rmiff{\mbox{ iff }}
\def\rmif{\mbox{ if }}
\def\rr{{\mathbb R}}
\def\SetOf#1#2{\left\{#1 \ \left| \ #2 \right.\right\}}
\def\sicube(#1){{\bf \Pi}^0_{#1}(\cube)}
\def\sirec(#1){{\bf \Sigma}^0_{#1}(\rec)}
\def\si{\sigma}
\def\sm{\setminus}
\def\st{\;:\;} 
\def\su{\subseteq}
\def\text{\rm}
\newtheorem{theorem}{Theorem}[section]
\newtheorem{theor}[theorem]{Theorem}
\newtheorem{lemma}[theorem]{Lemma}
\newtheorem{define}[theorem]{Definition}
\newtheorem{remark}[theorem]{Remark}
\newtheorem{prop}[theorem]{Proposition}
\newtheorem{ques}[theorem]{Question}
\newtheorem{cor}[theorem]{Corollary}
\def\addresspaul{\begin{flushleft}
Paul B. Larson\\
larsonpb@muohio.edu\\
http://www.users.muohio.edu/larsonpb\\
Department of Mathematics\\
Miami University\\
Oxford, Ohio 45056\\
\end{flushleft}}
\def\addressarn{\begin{flushleft}
Arnold W. Miller \\
miller@math.wisc.edu \\
http://www.math.wisc.edu/$\sim$miller\\
University of Wisconsin-Madison \\
Department of Mathematics, Van Vleck Hall \\
480 Lincoln Drive \\
Madison, Wisconsin 53706-1388 \\
\end{flushleft}}
\def\addressjuris{\begin{flushleft}
Juris Stepr{\={a}}ns \\
steprans@yorku.ca \\
York University\\
Department of Mathematics\\
4700 Keele Street\\
Toronto, Ontario \\
Canada M3J 1P3\\
\end{flushleft}}
\def\addressbill{\begin{flushleft}
William A.R. Weiss\\
weiss@math.toronto.edu\\
Department of Mathematics\\
University of Toronto, Ontario\\
Canada M5S 3G3\\
\end{flushleft}}
\begin{document}

\begin{center}
{\large Universal Functions}
\end{center}

\begin{flushright}
Paul B. Larson\\
Arnold W. Miller\\
Juris Stepr\={a}ns\\
William A.R. Weiss
\end{flushright}

{\small \tableofcontents}

\begin{center}  Abstract\footnote{
 Mathematics Subject Classification 2000: 03E15 03E35 0350
\par Keywords: Borel function, Universal, Martin's Axiom, Baire class,
cardinality of the continuum, Cohen real model.
\par Results obtained Mar-Jun 2009, Nov 2010.  Last revised April 2012.}
\end{center}

A function of two variables $F(x,y)$ is universal
iff for every other function $G(x,y)$ there exists functions $h(x)$ and
$k(y)$ with $$G(x,y)=F(h(x),k(y)).$$
Sierpinski showed that assuming the continuum hypothesis there exists
a Borel function $F(x,y)$ which is universal.
Assuming Martin's Axiom
there is a universal function of Baire class 2. A
universal function cannot be of Baire class 1.
Here we show that it is consistent that for each $\alpha$ with
$2<\alpha<\omega_1$  there is a universal function of class $\alpha$
but none of class $\beta<\alpha$. We show that it is consistent with
ZFC that there is no universal function (Borel or not) on the reals, 
and we show that it is consistent that there is a universal function but no
Borel universal function.  We also prove some results concerning
higher arity universal functions.  For example, the existence of
an $F$ such that for every $G$ there are $h_1,h_2,h_3$ 
such that for all $x,y,z$
$$G(x,y,z)=F(h_1(x),h_2(y),h_3(z))$$
is equivalent to the existence of a 2-ary universal $F$, however
the existence of
an $F$ such that for every $G$ there are $h_1,h_2,h_3$ 
such that for all $x,y,z$
$$G(x,y,z)=F(h_1(x,y),h_2(x,z),h_3(y,z))$$ 
follows from a 2-ary universal $F$ but is strictly weaker.

\section{Introduction}

\begin{define}\label{defunivfcn}
A function $F:X \times X\to X$ is universal iff
for any $$G:X \times X\to X$$ there is
$g:X\to X$ such that for all  $x,y\in X$
 $$G(x,y)=F(g(x),g(y)).$$
\end{define}

Sierpinski asked\footnote{Scottish book, Mauldin \cite{mauldin}
problem 132.} when $X$ is the real line if there always is a
Borel function which is universal.
He had shown that there is a Borel universal function assuming the
continuum hypothesis (Sierpinski \cite{sier}).

Without loss we may use different functions
on the $x$ and $y$ coordinates, i.e., $G(x,y)=F(g_0(x),g_1(y))$ in
the definition of universal function $F$.
To see this suppose we are given $F^*$ such that
for every $G$ we may find $g_0,g_1$ with $G(x,y)=F^*(g_0(x),g_1(y))$
for all $x,y$.  Then we can construct a universal $F$ which uses
only a single $g$.
Take a bijection, i.e., pairing function between
$X\times X$ and $X$, i.e., $(x_0,x_1)\mapsto\pair(x_0,x_1)$.
Define
$$F(\pair(x_0,x_1),\pair(y_0,y_1))=F^*(x_0,y_1)$$
where $\pair(x_0,x_1)$ is a pairing function. Given any
$g_0,g_1$ define $$g(u)=\la g_0(u),g_1(u) \ra$$
and note that
$$F(g(x),g(y))=F^*(g_0(x),g_1(y))$$
for every $x,y$.

In the case $X=2^\om$ there is a pairing function 
which is a homeomorphism and hence the Borel complexity
of $F$ and $F^*$ are the same.
For abstract universal $F$
a pairing function exists for any infinite $X$ by the
axiom of choice.

In section \ref{boreluniv} we show that the existence
of a Borel universal functions is equivalent to under a
weak cardinality assumption to the statement that
every subset of the plane is in the $\si$-algebra generated
by the abstract rectangles.  We also show that a universal
function cannot be of Baire class 1.

In section \ref{MA} we prove some results concerning Martin's
axiom and universal function.  We show that
although MA implies that there is a universal function of
Baire class 2 it is consistent to have ${\text MA}_{\aleph_1}$
hold but no Borel universal functions.

In section \ref{special} we consider universal functions of
a special kind.  For example, $F(x,y)=k(x+y)$.
We also discuss special versions due to Todorcevic and
Davies.

In section \ref{abstract} we consider abstract universal functions,
i.e., those defined on a cardinal $\ka$ with no notion of definability, 
Borel or otherwise.   We show that if $2^{<\ka}=\ka$, then they
exists.  We also show that it is consistent that none exists for
$\ka$ equal to the continuum. We also prove some
weak abstract versions of universal functions from the 
assumption ${\text MA}_{\aleph_1}$. 

In section \ref{higher} we take up the problem of universal
functions of higher arity.  We show that there is a natural hierarchy
of such notions and we show that this hierarchy is strictly descending.

\section{Borel Universal Functions}\label{boreluniv}

\begin{define} \label{defrec}
We let $\rec$ denote the family of abstract rectangles,
$$\rec=\{A\times B\st  A,B\su 2^\om\}.$$
\end{define}

\begin{define}
  $\sirec(\al)$ and $\pirec(\al)$ for $\al<\om_1$ are inductively defined by:
\begin{itemize}
\item $\sirec(0)=\pirec(0)=$ the finite boolean combinations
of sets from $\rec$,
\item $\sirec(\al)$ is the countable unions of
sets from $\pirec(<\al)=\bigcup_{\be<\al}\pirec(\be)$, and
\item $\pirec(\al)$ is the countable intersections of
sets from $\sirec(<\al)$.
\end{itemize}
\end{define}

\begin{define}
A Borel function $F:2^\om \times 2^\om\to 2^\om$ is at the
$\al$-level iff
for any $n$ the set $\{(u,v)\st F(u,v)(n)=1\}$ is
${\bf\Sigma}^0_\al$.
\end{define}
We remark that a Borel function at level $\al$ is in
Baire class $\al$, but not the converse.  In the context
of $2^\om$ a function is of Baire class $\al$ iff the
preimage of every clopen set is ${\bf\Delta}_{\al+1}$.
For more on the classical theory
of Baire class $\al$, see Kechris \cite{kechris} p. 190.

\begin{theorem}\label{mainthm}
Suppose that $2^{<\cc}=\cc$, then the following are equivalent:
\begin{enumerate}
\item There is a  Borel function $F:2^\om \times 2^\om\to 2^\om$
which is universal.
\item Every subset of the plane $2^\om \times 2^\om$ is
in the $\si$-algebra generated by the abstract rectangles, $\rec$.
\end{enumerate}
Furthermore, $\pow(2^\om\times 2^\om)=\sirec(\al)$
iff $F$ can be taken to be the $\al$-level.
\end{theorem}

\proof

$(1)\to (2)$.

\noindent
Suppose there is a Borel universal
$F:2^\om \times 2^\om\to 2$.
Let $A\su 2^\om\times 2^\om$ be arbitrary and suppose
$g:2^\om\to 2^\om$ has the property that
$$\forall x,y\;\;\;\;(x,y)\in A \rmiff F(g(x),g(y))=1.$$
Let $B$ be the Borel set $F^{-1}(1)$. Then $B$ is generated by countable unions
and intersections from sets of the form $C \times D$, for $C$, $D$ clopen subsets 
$2^{\om}$. Note
that $(x,y)\in A$ iff $(g(x),g(y))\in B$.
Define $h(x,y)=(g(x),g(y))$ and note that
$$h^{-1}(C\times D)=g^{-1}(C)\times g^{-1}(D)$$
for all sets $C, D \subseteq 2^{\om}$.
Since
$$A=h^{-1}(B),$$
and since preimages pass over countable unions and intersections
it follows that $A$ is in the $\si$-algebra of abstract
rectangles. Furthermore if $B$ is ${\bf \Sigma}^0_\al$, then
$A$ is $\sirec(\al)$.

\bigskip

$(2)\to (1)$.

\noindent
We show first that there exists an $X\su 2^\om$ of cardinality
$\cc$ which has the property that every $Y\su X$ of cardinality
strictly smaller than $\cc$ is Borel relative to $X$,
i.e., is the intersection of a
Borel set with $X$. See Bing, Bledsoe, and Mauldin \cite{bbm}.
Let $A\su\cc\times\cc$ be such that for every $B\in[\cc]^{<\cc}$
there exists $\al<\cc$ such that
$$B=A_{\al}=^{def}\{\be\st (\al,\be)\in A\}.$$
This is possible because $2^{<\cc}=\cc$.  Since $A$ is in
the $\si$-algebra generated by the abstract rectangles, there
exists a sequence $A_n\su \cc$ for $n<\om$
such that $A$ is in the $\si$-algebra generated by
$\{A_n\times A_m\st n,m<\om\}$.  Let
$f:\cc\to 2^\om$ be the Marczewski characteristic function for
the sequence $(A_n:n<\om)$, i.e.,
$$f(x)(n)=
\left\{
\begin{array}{ll}
0 & \mbox{ if } x\notin A_n \\
1 & \mbox{ if } x\in A_n
\end{array}\right.$$
Let $X=f(\cc)$. Let us check that $X$ has the required property.
Let $Y$ be a subset of $X$ of cardinality less than $\cc$, and let 
$B$ be a subset of $\cc$ of cardinality less than $\cc$ such that 
$Y = f(B)$. Each set of the form $A_{n} \times A_{m}$ is the preimage under
$f$ of a clopen subset of $2^{\om} \times 2^{\om}$. 
Again using the fact that preimages pass over countable unions and intersections, 
we can find a Borel subset $2^{\om} \times 2^{\om}$ whose preimage under $f$ is $A$. 
Then $Y$ will be one section of this set, intersected with $X$.  
Also note that if $A$ is $\sirec(\al)$, then every subset of
$X$ of cardinality strictly smaller than $\cc$ is ${\bf\Sigma}_\al^0$
relative to $X$.

Now let $U\subseteq 2^\om\times 2^\om$ be a universal ${\bf\Sigma}_\al^0$
set.  Define $G:2^\om\times 2^\om\to 2^\om$ by
$$\forall n \;\;\;\;(G(x,y)(n)=1 \rmiff (x_n,y)\in U)$$
where $x\mapsto (x_n:n<\om)\in (2^\om)^\om$ is a homeomorphism.

Let $f_1:\cc^2\to 2^\om$ be an arbitrary function with the property
that $\al>\be\to f_1(\al,\be)=\vec{0}$ (the identically zero map).
We claim that there exists $h_1,h_2:\cc\to 2^\om$ such that
$$f_1(\al,\be)=G(h_1(\be),h_2(\al)) \rmforall (\al,\be)\in \cc^2.$$
To see this, let $X=\{x_\ga:\ga <\cc\}$.  Let $h_2(\al)=x_\al$.
For each $n$ and $\be$ note that
$$B_n=^{def}\{x_\al : f_1(\al,\be)(n)=1\}$$
is a subset of $X$ of cardinality less that $\cc$ and so
there exists $y_n\in 2^\om$ such that
$B_n=X\cap U_{y_n}$.  Construct $h_1(\be)=y$ corresponding to
such a sequence $(y_n:n<\om)$.

By an analogous argument, if $f_2:\cc^2 \to 2^\om$ is an arbitrary
map with the property that $\be>\al\to f_2(\al,\be)=\vec{0}$,
then there exists $k_1,k_2:\cc\to 2^\om$
such that
$$f_2(\al,\be)=G(k_1(\al),k_2(\be))\rmforall (\al,\be)\in Q_2 .$$

Now define $F:2^\om\times 2^\om\to 2^\om$ by:
$$F((x_1,y_1),(x_2,y_2))=\max(G(x_2,x_1),G(y_1,y_2))$$
where $\max:2^\om\times 2^\om \to 2^\om$ is the pointwise
maximum, i.e., $\max(u,v)=w$ iff $w(n)$ is the maximum of
$u(n)$ and $v(n)$ for each $n<\om$. Then

$\;\;\;\;\; F((x_1,y_1),(x_2,y_2))(n)=1$ iff
$G(x_1,x_2)(n)=1$ or $G(y_2,y_1)(n)=1$.

We show the $F$ is universal.
Given an arbitrary $f:\cc\times\cc\to 2^\om$ we can
find $f_1$ and $f_2$ as above so that
$$f(\al,\be)=\max(f_1(\al,\be),f_2(\al,\be)) \rmforall (\al,\be)\in \cc^2.$$
Define
$l_1(\al)=(h_2(\al),k_1(\al))$ and $l_2(\be)=(h_1(\be),k_2(\be))$.
Then
$$f(\al,\be)=F(l_1(\al),l_2(\be)) \rmforall \al,\be<\cc.$$

Also $F$ is at the $\al$-level, i.e.,
for any $n$ the set $\{(u,v)\st F(u,v)(n)=1\}$ is
${\bf\Sigma}^0_\al$.

\qed

\begin{cor}
It is consistent that for each $\al$ with $2<\al<\om_1$ there is a universal
function of Baire class $\al$ but none of class $\be<\al$.
It is consistent that there is a universal function but no Borel
universal function.  If $\pp = \cc$,
then there is a universal function of Baire class 2.
\end{cor}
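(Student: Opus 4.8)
\proof
The strategy is to route everything through Theorem~\ref{mainthm}, which converts each question about Borel (or level-$\al$) universal functions into the single combinatorial question: for which $\al$ does the $\si$-algebra $\sirec(\al)$ already exhaust $\pow(\rcantor\times\rcantor)$? In each of the relevant models the hypothesis $2^{<\cc}=\cc$ of Theorem~\ref{mainthm} is available: it follows from $\pp=\cc$ (which gives Martin's Axiom for $\si$-centered posets below $\cc$, and hence $2^\la=\cc$ for every $\la<\cc$), and it can be arranged in the forcing extensions used for the two consistency statements.

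For the last assertion I would show that $\pp=\cc$ yields a set $X\su\rcantor$ of size $\cc$ such that every $Y\su X$ with $|Y|<\cc$ is relatively ${\bf\Sigma}^0_2$, i.e.\ relatively $F_\si$; this is a level-$2$ refinement of the construction in Bing, Bledsoe and Mauldin \cite{bbm}, carried out by a recursion of length $\cc$ in which each stage imposes fewer than $\cc$ relative-$F_\si$ requirements and $\pp=\cc$ selects a point meeting them all simultaneously. Such an $X$ is exactly what the second half of the proof of Theorem~\ref{mainthm} requires: feeding it in place of the set produced there from clause~(2) yields a universal function at the $2$-level, which by the Remark on levels is of Baire class $2$.

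For the two consistency statements I would again use the ``furthermore'' clause: a universal function exists at the $\al$-level exactly when $\pow(\rcantor\times\rcantor)=\sirec(\al)$, while a universal function of Baire class $\be$ forces $\pow(\rcantor\times\rcantor)=\sirec(\be+1)$, since then $F^{-1}(1)$ lies in ${\bf\Delta}^0_{\be+1}\su{\bf\Sigma}^0_{\be+1}$ and the $(1)\to(2)$ argument applies. Thus for a prescribed $\al$ with $2<\al<\om_1$ it suffices to force a model in which $\pow(\rcantor\times\rcantor)=\sirec(\al)$ but $\pow(\rcantor\times\rcantor)\neq\sirec(\ga)$ for every $\ga<\al$; the first clause then yields a universal function of Baire class $\al$, and by the implication just noted the second rules out every universal function of Baire class $\be$ with $\be+1<\al$. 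For limit $\al$ this already excludes all classes below $\al$; for successor $\al$ only the borderline class $\al-1$ remains, to be handled separately. For the statement separating universal from Borel universal functions I would pass to the model obtained by adding $\aleph_2$ Cohen reals to a model of CH: there $2^{<\cc}=\cc$, so a (non-definable) universal function exists by the results of Section~\ref{abstract}, whereas some planar set lies outside the $\si$-algebra generated by $\rec$, so by Theorem~\ref{mainthm} there is no Borel universal function.

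The main obstacle is the forcing behind the strict hierarchy in the first consistency statement. For each fixed $\al$ one needs a notion of forcing that, on the one hand, makes every planar set $\sirec(\al)$---equivalently, adjoins a set $X$ whose subsets of size $<\cc$ are relatively ${\bf\Sigma}^0_\al$---and, on the other hand, preserves a witnessing planar set that is not $\sirec(\ga)$ for any $\ga<\al$. Calibrating the generic $X$ so that its small subsets sit at level exactly $\al$, and verifying that the lower bound survives the iteration, is the delicate part; moreover the passage from ``no universal at level $<\al$'' to ``no universal of Baire class $<\al$'' requires, for successor $\al$, separately ruling out the class $\al-1$ (which by itself only forces level $\al$), via a direct complexity computation in the extension. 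By contrast the reduction through Theorem~\ref{mainthm} and the $\pp=\cc$ construction are comparatively routine.
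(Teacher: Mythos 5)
Your overall architecture coincides with the paper's: the paper's own proof of this corollary is essentially the reduction through Theorem~\ref{mainthm} plus three external inputs, namely Miller \cite{miller} (Theorem 37) for the consistency statements about abstract rectangles, Kunen's thesis result that in the Cohen real model some planar set is outside the $\si$-algebra generated by $\rec$, and Bell's theorem \cite{bell} together with Proposition~\ref{proprao} for the $\pp=\cc$ claim. Your handling of the third claim is correct and is a legitimate variant of the paper's: instead of invoking Proposition~\ref{proprao} (a Rao-style argument using a universal $F_\si$ set, $Q$-sets for small sets, and a scale), you build a single set $X$ of size $\cc$ all of whose subsets of size $<\cc$ are relatively $F_\si$, by a length-$\cc$ recursion using that $\pp=\cc$ gives the $Q$-set property for sets of size $<\cc$ and ${\rm add}({\mathcal M})=\cc$ (so the witnessing $F_\si$ sets can be taken meager and later points can be chosen to avoid all of them), and you then run the second half of the proof of Theorem~\ref{mainthm}, which indeed uses nothing about $X$ beyond this property. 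That buys a self-contained argument where the paper uses a citation, at the cost of a bookkeeping recursion.

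The genuine gap is in the first claim. You correctly reduce it to: for each $\al$, force a model of $2^{<\cc}=\cc$ in which $\pow(2^\om\times 2^\om)=\sirec(\al)$ but $\pow(2^\om\times 2^\om)\neq\sirec(\ga)$ for all $\ga<\al$ --- and then you stop, calling the forcing ``the delicate part.'' But that consistency statement is the entire content of the claim; it is exactly what the paper imports from Theorem 37 of Miller \cite{miller}, and nothing in your proposal constructs such a model or identifies a source for it, so the first claim remains unproved. The same issue, in miniature, occurs in the second claim: you assert without proof or reference that in the Cohen real model some planar set lies outside the $\si$-algebra generated by $\rec$; that is Kunen's theorem, which the paper cites. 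Finally, your point about the borderline class for successor $\al$ --- that a Baire class $(\al-1)$ universal function only forces $\pow(2^\om\times 2^\om)=\sirec(\al)$, which is true in the intended model, so the rectangle hierarchy alone does not exclude it --- is a genuine subtlety; it is precisely the level-versus-Baire-class distinction behind the open Question stated immediately after this corollary, and the paper's one-line citation does not visibly address it either. You deserve credit for flagging it, but your proposal also leaves it unresolved (``a direct complexity computation in the extension'' is not an argument), so even granting the rectangle consistency results, your proof of the first claim is complete only for limit $\al$.
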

\proof
This follows from corresponding results about
the $\si$-algebra of abstract rectangles, see Miller \cite{miller}
Theorem 37.
The existence of an abstract universal function follows
from $\cc^{<\cc}=\cc$ Theorem \ref{existuniv} and this holds
in many models in which not every subset of the plane is in the
$\sigma$-algebra generated by the abstract rectangles.  For example,
Kunen in his thesis showed this is true in the Cohen real model.
The cardinal $\pp$ is the psuedo-intersection number.  An
equivalent definition for it is
the smallest cardinal for which Martin's Axiom of for $\si$-centered
posets fails.  This is due to Bell \cite{bell}),
for the proof Bell's Theorem see also Weiss \cite{weiss}.
Proposition \ref{proprao} shows that if $\pp = \cc$,
then there is a universal function of Baire class 2.
\qed

\begin{ques}
Suppose there is a universal function of Baire class $\al$. Then
is there a universal function of level $\al$?
\end{ques}

\begin{prop}
A universal function cannot be of Baire class 1.
\end{prop}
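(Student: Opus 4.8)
The plan is to reduce the statement to a purely combinatorial containment property of the ``matrix'' of $F$ and then defeat that property using the structure of Baire class $1$ functions. First I would pass to a two-valued formulation. Using the opening remark that one may use separate functions on the two coordinates, a universal $F$ satisfies: for every $G:2^\om\times 2^\om\to 2^\om$ there are $g_0,g_1$ with $G(x,y)=F(g_0(x),g_1(y))$. Projecting onto the $n=0$ output bit and letting $B=\{(u,v)\st F(u,v)(0)=1\}$, every two-valued $G=\chi_A$ with $A\su 2^\om\times 2^\om$ gives $A=\{(x,y)\st (g_0(x),g_1(y))\in B\}$. Thus a universal Baire class $1$ function would produce a single set $B$ whose characteristic function is Baire class $1$ (equivalently, $B$ is ${\bf \Delta}^0_2$, i.e.\ simultaneously $F_\sigma$ and $G_\delta$) such that \emph{every} $0/1$-matrix $\chi_A$ occurs as a ``rectangular submatrix'' $(\chi_B(a_i,b_j))_{i,j}$ of $\chi_B$, where $a_i=g_0(x_i)$ and $b_j=g_1(y_j)$ range over the selected rows $R_0$ and columns $R_1$. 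Note that I cannot route this through the Theorem's level hierarchy: a Baire class $1$ $F$ has each $\{(u,v)\st F(u,v)(n)=1\}$ only ${\bf \Delta}^0_2\su{\bf \Sigma}^0_2$, so it is at the $2$-level and the existence of $2$-level universal functions is consistent; the argument must therefore exploit Baire class $1$ directly.

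The key structural input is Baire's characterization (which I may assume): $\chi_B$ is of Baire class $1$ \rmiff\ its restriction to every nonempty closed set has a point of continuity, and in fact the continuity points of $\chi_B\res(K\times K)$ are comeager in $K\times K$ for each closed $K$. I would apply this to $K=\overline{R_0}$ on the first factor and $\overline{R_1}$ on the second. At a continuity point $(w,z)$ of $\chi_B\res(\overline{R_0}\times\overline{R_1})$ there is a relatively open neighborhood $U\times V$ on which $\chi_B$ is constant; hence $B$ (or its complement) contains all of $(U\cap\overline{R_0})\times(V\cap\overline{R_1})$.

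The next step is to arrange that $U\cap R_0$ and $V\cap R_1$ are \emph{infinite}, so that the constant value of $\chi_B$ on this neighborhood transports back through $g_0,g_1$ to an infinite monochromatic combinatorial rectangle in the prescribed matrix $\chi_A$. For this I need $(w,z)$ to be a pair of condensation points of $R_0$ and $R_1$. If $R_0$ and $R_1$ are uncountable, all but countably many of their points are condensation points, so the set of such pairs is comeager-co-countable and meets the comeager set of continuity points; choosing $(w,z)$ there makes $U\cap R_0$ and $V\cap R_1$ infinite. One can force $R_0,R_1$ to be uncountable by taking $A$ to have all rows and all columns distinct, which forces $g_0,g_1$ to be injective. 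The conclusion of this step is: if such a universal $B$ existed, then every $A$ (with distinct rows and columns) on an uncountable index set would contain an infinite monochromatic combinatorial rectangle, i.e.\ infinite $P,Q$ with $\chi_A$ constant on $P\times Q$.

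The proof is then completed by exhibiting a single $A$ on an index set of size $\cc$ that has \emph{no} infinite monochromatic rectangle; this is the step I expect to be the main obstacle. On a countable index set the strict order works (for $\chi_A(i,j)=1\rmiff i<j$ on $\om$, an all-$1$ rectangle forces $\sup P\le\inf Q$ and an all-$0$ rectangle forces $\inf P\ge\sup Q$, both impossible when $P,Q$ are infinite), but a countable index set is useless here: the adversary can realize it with $R_0$ a convergent sequence, whose only condensation point may be the unique discontinuity of $\chi_B$, so no infinite rectangle is produced. An uncountable index set is genuinely required, and there the strict order fails because bounded infinite initial segments create monochromatic rectangles. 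The crux is thus to construct in ZFC a two-valued $A$ on $\cc$ admitting no infinite monochromatic rectangle, for instance by taking the rows to be an almost disjoint (indeed suitably independent) family of subsets of $\cc$ so that every infinite subfamily has finite common $1$-set and finite common $0$-set; verifying that such a family exists and defeats every choice of $R_0,R_1$ is the heart of the matter, after which the contradiction with the monochromatic-rectangle conclusion yields the Proposition.
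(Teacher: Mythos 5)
Your overall architecture --- project $F$ to a single output bit, obtain a ${\bf \Delta}^0_2$ set $B$ whose matrix must contain every prescribed $0/1$ matrix as a rectangular submatrix, then use continuity points of restrictions of $\chi_B$ to closed sets to extract a monochromatic rectangle --- shares its analytic ingredient with the paper's proof, but it has two genuine gaps. The first is repairable: you locate the continuity point inside the set of pairs of condensation points by asserting that this set is ``comeager-co-countable'' in $\overline{R_0}\times\overline{R_1}$, and that is false. If $R_0=C\cup D$ with $C$ a nowhere dense Cantor set and $D$ a countable dense set, then $\overline{R_0}=2^\om$ while the condensation points of $R_0$ are exactly $C$, a nowhere dense set; the comeager set of continuity points of $\chi_B\res(\overline{R_0}\times\overline{R_1})$ may miss every such pair. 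The repair is to apply the restriction theorem directly to the nonempty closed set $P_0\times P_1$, where $P_i$ is the set of condensation points of $R_i$: a continuity point there gives a clopen $U\times V$ with $\chi_B$ constant on $(U\cap P_0)\times(V\cap P_1)$, and since $R_i\sm P_i$ is countable this still pulls back to an infinite monochromatic rectangle in $A$.

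The second gap is fatal to the plan as written: the object you need at the end --- a $\cc\times\cc$ matrix with no monochromatic rectangle having both sides countably infinite --- does not exist in ZFC, so the step you call ``the heart of the matter'' cannot be carried out. Whenever the splitting number satisfies $\mathfrak{s}>\om_1$ (e.g.\ under ${\rm MA}_{\aleph_1}$, a hypothesis central to this very paper), every $c:\cc\times\cc\to 2$ has such a rectangle: consider the traces $Y_\beta\su\om$ of the columns on the first $\om$ rows; if some trace occurs infinitely often we are done, and otherwise there are $\om_1$ distinct traces, which, being fewer than $\mathfrak{s}$, all fail to split some infinite $S\su\om$; uncountably many columns then almost contain or almost avoid $S$, and pigeonholing the finite error sets yields an infinite $S'\su S$ contained in, or disjoint from, uncountably many columns. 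Moreover your specific proposal is outright contradictory: if the rows form an almost disjoint family of subsets of $\cc$ and every infinite subfamily has cofinite union, then two disjoint countably infinite subfamilies have cofinite unions whose intersection, a countable union of finite sets, would have to have size $\cc$. The strategy survives only because the repaired step above delivers more than you ask of it: using $\cc$-condensation points (and $\mathrm{cf}(\cc)>\om$) the monochromatic rectangle has both sides of size $\cc$, and a matrix with no $\cc$-sized monochromatic rectangle does exist in ZFC, namely $A=\{(x,y)\st x\prec y\}$ for $\prec$ a well-ordering of $2^\om$ in type $\cc$, since a monochromatic $P\times Q$ with $|P|=|Q|=\cc$ would force $P$ or $Q$ into a proper initial segment of $\prec$, which has size $<\cc$. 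With these two repairs your argument closes, and it is then genuinely different from the paper's proof, which never mentions rectangles: the paper builds $G$ by diagonalizing against all countable functions with dense-in-itself range, so that some Baire class $1$ section $F(h(r_\xi),\cdot)$ would have to equal $1$ and $0$ on two dense subsets of a perfect set, contradicting the existence of continuity points.
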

\proof
Suppose that $F$ is of Baire class 1. Let $\{h_\xi\}_{\xi\in\mathfrak c}$
enumerate all functions from a countable subset of $\rcantor$ whose range is dense
in itself.
Let $\{r_\xi\}_{\xi\in\mathfrak c}$ enumerate all $\rcantor$.
For each $\xi$ partition the domain of $h_\xi$ into $A_\xi$ and $B_\xi$
 such that
 $$\overline{\SetOf{h_\xi(x)}{x\in A_\xi}} =
 \overline{\SetOf{h_\xi(x)}{x\in B_\xi}}$$
and let $G:\rcantor^2 \to \rcantor$ be any function satisfying
$G(r_\xi,r) = 1$ if $r \in A_\xi$ and $G(r_\xi,r) = 0$ if $r \in B_\xi$.

Now suppose that $h:\rcantor\to \rcantor$ witnesses that $F$ is universal for the
function $G$. It is clear that the range of $h$ must uncountable.
Hence there is
$\xi$ such that $h_\xi\subseteq h$. Then
$G(r_\xi,r) = F(h(r_\xi),h_\xi(r))$
for all $r\in A_\xi\cup B_\xi$.

If $f$ is the function defined by $f(y) = F(h(r_\xi),y)$ then $f$
must be Baire 1 and, in particular, defining
$$C = \overline{\SetOf{h_\xi(r)}{r\in A_\xi}} =
\overline{\SetOf{h_\xi(r)}{r\in B_\xi}}
$$
it follows that
$f\restriction C$ is Baire 1. However,
$$f(h_\xi(r)) = F(h(r_\xi),h_\xi(r)) =
G(r_\xi,r) = 1 \mbox { for }r \in A_\xi.$$
Similarly $f(h_\xi(r)) = 0$
for $ r\in B_\xi$.
This is impossible for any Baire class 1 function on the perfect set $C$.
\qed

The techniques of Miller \cite{millergensous} can be
used to produce models with an analytic universal function
but no Borel universal function.

\section{Universal Functions and Martin's Axiom}\label{MA}

Martin's Axiom implies that there are universal
functions on the reals of Baire class 2, see Proposition \ref{proprao}.
Here we show that weakening of Martin's axiom is not strong enough.

\begin{lemma}\label{l:m}
If there are models of set theory $\{{\mathfrak M}_a\}_{a\in \rcantorpar^3}$ 
such that:
\begin{enumerate}
\item $a\in {\mathfrak M}_a$ for each $a\in \rcantorpar^3$
\item $\Reals\not\subseteq {\mathfrak M}_a$ for each $a\in \rcantorpar^3$
\item for any $h:\Reals \to \Reals$ and any $x\in \Reals$ there are reals $y$ and $z$ such that $\{h(y),h(z)\}\subseteq  {\mathfrak M}_{(x,y,z)}$
\end{enumerate}
then there is no Borel universal function.  Moreover, the models ${\mathfrak M}_a$ need only be models of a sufficiently large fragment of set theory to code Borel sets by reals.
\end{lemma}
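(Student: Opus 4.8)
The plan is to argue by contradiction: assume $F\colon \Reals\times\Reals\to\Reals$ is a Borel universal function and use the given family of models to manufacture a single $G$ that $F$ cannot represent. Fix a Borel code $c$ for $F$. The one structural fact I need about the models is the \emph{absoluteness of Borel evaluation}: if $\mathfrak M$ is one of the $\mathfrak M_a$, if $c\in\mathfrak M$, and if $u,v\in\mathfrak M$, then the value $F(u,v)$---being the unique real whose Borel relation to $u,v$ is witnessed by $c$---also lies in $\mathfrak M$. This is exactly the content of the closing remark that the $\mathfrak M_a$ need only decode Borel codes, and it is the only place that remark is used; accordingly I will assume (as we may, since the models are large enough to contain any one fixed real while still satisfying (2)) that $c\in\mathfrak M_a$ for every $a$, so that Borel evaluation is absolute to each $\mathfrak M_a$.

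Next I fix once and for all an arbitrary real $x_0$ and define the diagonalizing function $G$ from conditions (1) and (2). For each pair $(y,z)\in\Reals\times\Reals$, condition (2) gives $\Reals\not\su\mathfrak M_{(x_0,y,z)}$, so using choice I may select a real $G(y,z)\in\Reals\sm\mathfrak M_{(x_0,y,z)}$. This defines a function $G\colon\Reals\times\Reals\to\Reals$ which is completely arbitrary, and that arbitrariness is precisely what makes it a legitimate test of universality.

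Now I invoke universality together with condition (3). Since $F$ is universal there is $h\colon\Reals\to\Reals$ with $G(a,b)=F(h(a),h(b))$ for all $a,b\in\Reals$. Applying condition (3) to this $h$ and to the fixed real $x_0$ yields reals $y,z$ with $\{h(y),h(z)\}\su\mathfrak M_{(x_0,y,z)}$. On the one hand $G(y,z)=F(h(y),h(z))$ by the choice of $h$, and since $h(y),h(z),c\in\mathfrak M_{(x_0,y,z)}$, absoluteness of Borel evaluation gives $F(h(y),h(z))\in\mathfrak M_{(x_0,y,z)}$; hence $G(y,z)\in\mathfrak M_{(x_0,y,z)}$. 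On the other hand $G(y,z)\notin\mathfrak M_{(x_0,y,z)}$ by construction. This contradiction shows that no Borel universal $F$ exists.

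The routine parts---choosing $G$ and quoting universality---are immediate, and among the four evaluations $F(h(y),h(z)),\,F(h(z),h(y)),\,F(h(y),h(y)),\,F(h(z),h(z))$ made available by (3) we in fact need only the first. The step that deserves care, and the one the hypotheses are genuinely built to support, is the absoluteness of Borel evaluation together with the bookkeeping that the \emph{same} model $\mathfrak M_{(x_0,y,z)}$ which contains $h(y)$ and $h(z)$ is the one against which $G(y,z)$ was diagonalized. Fixing the first coordinate $x_0$ before defining $G$ is exactly what aligns these two uses of the model, and condition (3) is stated for every $x$ precisely so that it remains available at our particular value $x_0$.
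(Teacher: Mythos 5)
There is a genuine gap, and it sits exactly at the step you yourself flag as the delicate one. You assert that you may assume the Borel code $c$ of $F$ lies in $\mathfrak{M}_a$ for \emph{every} $a$, ``since the models are large enough to contain any one fixed real while still satisfying (2).'' Nothing in the hypotheses licenses this: the family $\{\mathfrak{M}_a\}_{a\in (2^\om)^3}$ is \emph{given}, not constructed by you, and conditions (1)--(3) say nothing about any particular fixed real belonging to all (or even many) of the models. Indeed, in the intended application (Theorem~\ref{t:p}), $\mathfrak{M}_{(x,y,z)}$ is essentially an initial segment $V_\xi$ of a long product extension determined by the ``birth times'' $\mu(x),\mu(y),\mu(z)$; a fixed real $c$ born late in the iteration fails to be in $\mathfrak{M}_a$ for all $a$ whose coordinates are born early. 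So your blanket assumption is false precisely in the situation the lemma is built for, and without it your invocation of absoluteness of Borel evaluation --- the step that produces $F(h(y),h(z))\in \mathfrak{M}_{(x_0,y,z)}$ --- has no support.

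The repair is exactly what condition (1) is for, and it is telling that your argument never uses (1) at all. Instead of fixing an arbitrary $x_0$, take $x_0$ to \emph{be} the real $c$ coding $F$. Then for the $y,z$ produced by condition (3), hypothesis (1) gives $(c,y,z)\in \mathfrak{M}_{(c,y,z)}$, so the model contains the code (it can decode the triple, being a model of enough set theory), and Borel evaluation inside $\mathfrak{M}_{(c,y,z)}$ yields $F(h(y),h(z))\in \mathfrak{M}_{(c,y,z)}$, contradicting $G(y,z)\notin \mathfrak{M}_{(c,y,z)}$. This is the paper's proof; everything else in your write-up (diagonalizing $G$ via (2), quoting universality, applying (3) to the chosen first coordinate) matches it. The lesson is that the quantifier ``for any $x$'' in (3) is there so that it can be instantiated at the code of $F$, not at an arbitrary $x_0$; the alignment you correctly identify as crucial is achieved by that instantiation, not by enlarging the models.
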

\begin{proof}
Suppose that $F$ is a Borel universal function. Let $x$ be  a real coding it. Define $G(y,z)$ to be any element of $\Reals \setminus {\mathfrak M}_{(x,y,z)}$. Then if  $h:\Reals\to \Reals$ it is possible to find reals $y$ and $z$ such that $\{h(y),h(z)\}\subseteq  {\mathfrak M}_{(x,y,z)}$. But then, since ${\mathfrak M}_{(x,y,z)}$ is a model of set theory, it follows that $F\in {\mathfrak M}_{(x,y,z)}$ and hence $F(h(y),h(z))\in {\mathfrak M}_{(x,y,z)}$. Since $G(y,z)\notin {\mathfrak M}_{(x,y,z)}$ it follows that $F(h(y),h(z))\neq G(y,z)$ and hence $F$ can not be universal.
\end{proof}

\begin{theor}\label{t:p}
If there is a model of set theory then there is a model of set theory in which there is no
Borel universal function. Indeed, there is no Borel universal function in any model obtained by forcing with a finite support product of $\kappa^+$ ccc partial orders if $\kappa$ has uncountable cofinality.
\end{theor}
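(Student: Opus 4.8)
The plan is to apply Lemma \ref{l:m}: working in the extension $V[G]$, I will build a family $\{\mathfrak M_a\}_{a\in\rcantorpar^3}$ of inner models of the form $\mathfrak M_a = V[G\res S_a]$ (for suitable $S_a\su\ka^+$), which automatically model ZFC and so suffice for the Lemma. For the first sentence it is enough to take $\ka=\om_1$ and force with the finite support product of $\om_2$ copies of Cohen forcing over a model of GCH, so I concentrate on the general statement, assuming (as I may, GCH holding below $\ka$ in the ground model) that $\ka^{\aleph_0}=\ka$. Write $\poset=\prod^{\mathrm{fs}}_{i<\ka^+}\poset_i$ with each $\poset_i$ nontrivial ccc of size $\le\ka$; then $\poset$ is ccc and every real $r\in V[G]$ has a countable support $\si(r)$ with $r\in V[G\res\si(r)]$.

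The one place the hypothesis $\mathrm{cf}(\ka)>\om$ is used is a counting fact: for every $\de<\ka^+$ the model $V[G\res\de]$ contains at most $\ka$ reals. Indeed $|\prod_{i\in\de}\poset_i|\le\ka$, so by ccc the reals of $V[G\res\de]$ are computed from $\le\ka^{\aleph_0}=\ka$ nice names. Consequently $\Reals=\bigcup_{\de<\ka^+}(V[G\res\de]\cap\Reals)$ is a continuous increasing union of length $\ka^+$ whose pieces have size $\le\ka$; let $\mathrm{rk}(r)$ be the least $\de$ with $r\in V[G\res\de]$. (For $\ka$ of cofinality $\om$ this fails — the pieces can already contain continuum many reals — which is exactly why CH-style models, where universal functions do exist, are not excluded.)

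Next I would prove the reflection step. Fix $h\colon\Reals\to\Reals$ in $V[G]$ and put $F(\de)=\sup\{\mathrm{rk}(h(r))\st r\in V[G\res\de]\cap\Reals\}$; the counting fact gives $F(\de)<\ka^+$. The closure points of $F$ form a club, and if $\de$ is such a point with $\mathrm{cf}(\de)>\om$ then every $r\in V[G\res\de]$ already lies in some $V[G\res\ga]$ with $\ga<\de$ (its countable support is bounded below $\de$), so $\mathrm{rk}(h(r))\le F(\ga)<\de$. Hence $V[G\res\de]\cap\Reals$ is closed under $h$, and the set $C_h$ of such $\de$ is stationary.

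It remains to fix the indexing and verify the three clauses, which is where the real work lies. Since $\cc=\ka^+$ and each rank level adds only $\le\ka$ new triples, I would choose, by a transfinite bookkeeping, a bijection $b\colon\ka^+\to\rcantorpar^3$ that is \emph{rank--respecting}, $\mathrm{rk}(b(\nu))\le\nu$, and \emph{cofinally spread}: for every $x$ and every $\de<\ka^+$ some pair $y,z\in V[G\res\de]$ has $b^{-1}(x,y,z)\ge\de$. Setting $\mathfrak M_a=V[G\res b^{-1}(a)]$, clause (1) holds because $\mathrm{rk}(a)\le b^{-1}(a)$, and clause (2) holds because $b^{-1}(a)<\ka^+$ while cofinally many coordinates still add reals. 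For clause (3), given $h$ and $x$ I pick $\de\in C_h$ with $\de>\mathrm{rk}(x)$ and then, by cofinal spread, $y,z\in V[G\res\de]$ with $b^{-1}(x,y,z)\ge\de$; closure gives $h(y),h(z)\in V[G\res\de]\su\mathfrak M_{(x,y,z)}$. Lemma \ref{l:m} then denies a Borel universal function. The hard part is precisely the construction of $b$: the rank--respecting and cofinally--spread demands pull in opposite directions, and reconciling them hinges on the fact that each initial piece $V[G\res\de]$ is small (only $\ka$ reals), so the $\le\ka$ triples living below $\de$ can be deferred to positions past $\de$ without exhausting the $\ka^+$ available slots — once more a consequence of $\mathrm{cf}(\ka)>\om$.
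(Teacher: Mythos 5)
There is a fatal gap: the ``cofinally spread'' rank-respecting bijection $b$ that your whole argument rests on does not exist, and this is provable from your own assumptions. Fix $x$ and define $\sigma_x(\delta)=\sup\{b^{-1}(x,y,z)\st y,z\in V[G\res\delta]\cap 2^\om\}$; by your counting fact there are at most $\kappa$ such pairs, so $\sigma_x(\delta)<\kappa^+$ by regularity of $\kappa^+$. The closure points of $\sigma_x$ form a club, so there is a closure point $\delta^*$ of uncountable cofinality (as $\kappa^+\geq\omega_2$). By the very countable-support argument you use in your reflection step, every pair $y,z\in V[G\res\delta^*]$ already lies in some $V[G\res\gamma]$ with $\gamma<\delta^*$, whence $b^{-1}(x,y,z)\leq\sigma_x(\gamma)<\delta^*$: no pair from $V[G\res\delta^*]$ is placed at or above $\delta^*$, so cofinal spread fails at $(x,\delta^*)$ --- and these are exactly the $\delta$ (closure points of uncountable cofinality) at which your clause~(3) argument needs it. Worse, this is not a bookkeeping defect but an obstruction to the entire approach: for \emph{any} assignment $(x,y,z)\mapsto\nu_{(x,y,z)}<\kappa^+$ with ${\mathfrak M}_{(x,y,z)}=V[G\res\nu_{(x,y,z)}]$, fix $x_0$, put $\sigma(\rho)=\sup\{\nu_{(x_0,y,z)}\st y,z\in V[G\res\rho]\cap 2^\om\}<\kappa^+$, and let $h(r)$ be any real of rank greater than $\sigma(\mathrm{rk}(r))$. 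Given any pair $(y,z)$, let $w$ be the one of larger rank; then $\nu_{(x_0,y,z)}\leq\sigma(\mathrm{rk}(w))<\mathrm{rk}(h(w))$, so $h(w)\notin{\mathfrak M}_{(x_0,y,z)}$, and clause~(3) of Lemma~\ref{l:m} fails for $(h,x_0)$. So initial-segment models can never satisfy the Lemma; the two demands you noted ``pull in opposite directions'' are in fact contradictory, and the smallness of the levels (which you invoke to reconcile them) is precisely what makes the adversary's supremum $\sigma$ well defined.

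This is exactly why the paper's proof does \emph{not} use initial segments: its models are $V_\Gamma$ for $\Gamma=\bigl(\mu(z)+\theta(x,y)\bigr)\cup\bigl(\kappa^+\setminus(\mu(z)+\kappa)\bigr)$, an initial segment together with a \emph{tail}. The tail is what captures $h(z)$: the countable support of $h(z)$ splits into a part inside the window $[\mu(z),\mu(z)+\kappa)$, which is bounded there because $\mathrm{cf}(\kappa)>\omega$, and a part above $\mu(z)+\kappa$, which lies in $\Gamma$ for free; and the needed window-width $\eta<\kappa$ is communicated to the model through the triple itself by choosing $y=y_\eta$ with $\mu(y_\eta)=\mu(x)+\eta$. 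Note that this is also where $\mathrm{cf}(\kappa)>\omega$ genuinely enters --- bounding a countable set in a window of order type $\kappa$ --- not in counting reals per level; consequently the paper needs no bound on $|\Poset_i|$, no $\kappa^{\aleph_0}=\kappa$, and no $\cc=\kappa^+$, so the extra hypotheses you impose (which already restrict the ``Indeed'' clause, whose ground model and ccc factors are arbitrary) are both unjustified and, once the right models are used, unnecessary; while without them your reflection step $F(\delta)<\kappa^+$ itself breaks down.
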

\begin{proof}
Let $\Poset_\alpha$ be a ccc partial order for each $\alpha \in \kappa^+$ and suppose
that $$G\subseteq \prod_{\alpha\in \kappa^+}\Poset_\alpha$$
is generic over $V$. Since the finite support iteration adds reals, by taking products of countably many $\Poset_\alpha$ it may as well be assumed that each $\Poset_\alpha$ adds a real.
 For any $\Gamma\subseteq \kappa^+$ let $V_\Gamma$ denote the model $V[G\cap \prod_{\alpha\in \Gamma}\Poset_\alpha]$. For any $x\in \Reals$ in $V[G]$ let $\mu(x)$ be the least ordinal such that $x \in V_{\mu(x)}$.

Given  $(x,y,z)\in \rcantorpar^3$ suppose first that there is no $\theta\in \kappa$ such that $\mu(y) = \mu(x) + \theta$. In this case
 define ${\mathfrak M}_{(x,y,z)} = V_\xi$ where $\xi$ is the largest of $\mu(x)$, $\mu(y)$ and $\mu(z)$. The ccc guarantees that $\xi< \kappa^+$ and the new reals added ensure that  (1) and (2) of Lemma~\ref{l:m} hold. Otherwise, let $\theta(x,y)\in \kappa$ be
such that $\mu(y) = \mu(x) + \theta(x,y)$. Let $\Gamma_{(x,y,z)} = \mu(z) + \theta(x,y) \cup (\kappa^+\setminus (\mu(z) + \kappa))$ and let ${\mathfrak M}_{(x,y,z)} = V_{\Gamma_{(x,y,z)}}$. It is again clear that (1) and (2) of Lemma~\ref{l:m} hold.

To see that (3) holds suppose that $h:\Reals \to \Reals$ and $x\in \Reals$ are in $V[G]$.
For each $\eta\in \kappa$ let $y_\eta\in \Reals$ be such that $\mu(y_\eta) = \mu(x) + \eta$. In other words, $\theta(x,y_\eta) = \eta$. Using the ccc, find $\beta$ so large that
$h(y_\eta)\in V_\beta$ for each $\eta \in \kappa$.  Now let $z\in \Reals$ be such that $\mu(z) = \beta$ and find $\eta\in \kappa$ large enough that $h(z) \in V_\Gamma$
where $\Gamma = \beta + \eta \cup (\kappa^+\setminus (\beta + \kappa))$.
It follows that ${\mathfrak M}_{x,y_\eta,z} = V_\Gamma$ and hence
$\{h(y_\eta),h(z)\}\subseteq {\mathfrak M}_{x,y_\eta,z} $. Hence (3) of Lemma~\ref{l:m} is also satisfied and the result now follows from Lemma~\ref{l:m}.
\end{proof}

\begin{theor}
If there is a model of set theory then there is a model of set
theory in which there is no
Borel universal function yet ${\text MA}_{\aleph_1}$ holds.
\end{theor}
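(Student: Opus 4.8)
The plan is to force ${\text MA}_{\aleph_1}$ by a finite support iteration of length $\om_2$ over a ground model of GCH, and then to verify that the resulting model satisfies the hypotheses of Lemma~\ref{l:m} with $\ka=\aleph_1$, so that $\ka^+=\om_2=\cc$ and $\ka$ has uncountable cofinality exactly as Theorem~\ref{t:p} requires. Starting from $V\models$ GCH, let $\la \Poset_\al,\dot{\mathbb Q}_\al:\al<\om_2\ra$ be the standard iteration, where the iterands $\dot{\mathbb Q}_\al$ are ccc posets of size at most $\aleph_1$ chosen by a bookkeeping function that enumerates cofinally often all names for such posets. Writing $V_\ga=V[G\cap\Poset_\ga]$ for the initial-segment extensions and $\mu(x)$ for the least $\ga$ with $x\in V_\ga$, conditions (1) and (2) of Lemma~\ref{l:m} follow precisely as in the proof of Theorem~\ref{t:p}: the ccc keeps every relevant support below $\om_2$, and reals are added cofinally.

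All of the real content lies in re-establishing condition (3). I would run the shifting argument of Theorem~\ref{t:p} essentially verbatim: given $h$ and $x$, fix reals $y_\eta$ with $\mu(y_\eta)=\mu(x)+\eta$ for $\eta<\ka$, use the ccc to find $\be<\om_2$ with $h(y_\eta)\in V_\be$ for every $\eta$, take $z$ with $\mu(z)=\be$, and then invoke the uncountable cofinality of $\ka$ to choose $\eta<\ka$ so large that the countable support of $h(z)$ misses the interval $[\be+\eta,\be+\ka)$. The target model is then $V_\Gamma$ for $\Gamma=(\be+\eta)\cup(\om_2\sm(\be+\ka))$, which omits a final segment of a single $\ka$-block. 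One checks, as before, that this $V_\Gamma$ contains $x$, $y_\eta$, $z$, their images under $h$, and the code for $F$, but not the value $G(y_\eta,z)$.

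The main obstacle, and the sole point where the iteration parts company with the product of Theorem~\ref{t:p}, is that $V_\Gamma$ must genuinely be an intermediate model. For a product this factorization through $\Gamma$ is automatic, whereas in an iteration the iterands $\dot{\mathbb Q}_\al$ with $\al\ge\be+\ka$ carry names that may read the generic on the omitted interval $[\be+\eta,\be+\ka)$, so $V_\Gamma$ need be neither forcing-closed nor even definable. Worse, forcing full ${\text MA}_{\aleph_1}$ forces us to iterate posets that appear only after using the interior of a block, so the tail cannot factor over every block. To remove this obstacle I would build the iteration as a matrix (two-dimensional finite support) system in which the $\om_2$ coordinates are organized into $\ka$-blocks and the construction is engineered so that, for the particular block boundaries $\be$ and indices $\eta$ that can arise in the argument above, the tail forcing past the block factors over the block; the bookkeeping is then steered so that the posets reading a block's interior are all forced within or immediately after that block, leaving the relevant $V_\Gamma$ a genuine intermediate extension. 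I expect the delicate, load-bearing step to be showing that one can simultaneously (a) arrange this block-factoring for all intervals the shifting argument produces and (b) still catch every ccc poset of size $\aleph_1<\cc$ so that ${\text MA}_{\aleph_1}$ holds in full; granting this, Lemma~\ref{l:m} delivers a model with ${\text MA}_{\aleph_1}$ and no Borel universal function.
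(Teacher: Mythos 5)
You correctly isolate the central obstacle --- in an iteration, unlike a product, the model $V_\Gamma$ for $\Gamma=(\be+\eta)\cup(\om_2\sm(\be+\ka))$ need not exist as an intermediate extension --- but your proposal does not overcome it: the entire content of the theorem is concentrated in the step you label "delicate, load-bearing" and then grant ("the bookkeeping is then steered so that\dots", "granting this\dots"). That is a genuine gap, not a technicality. Worse, your choice of parameters (length $\om_2$, $\ka=\aleph_1$) makes the obstacle essentially unresolvable rather than merely delicate. Any completely embedded suborder of the iteration that contains a stage $\ga$ beyond a block must be closed under the support of the name $\Poset_\ga$, and these names have size (hence support of size) up to $\aleph_1$, which is exactly the size of an $\om_1$-block. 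Moreover, forcing ${\rm MA}_{\aleph_1}$ compels you to include, after any block $[\be,\be+\om_1)$, stages whose poset names have support cofinal in that block (for instance posets built from $\aleph_1$ many reals added cofinally in the block), so every block boundary is ruined: an adversarial $h$ can send $z$ to a real added by such a stage, and then no model capturing $h(z)$ (condition (3) of Lemma~\ref{l:m}) can omit a final segment of the block (condition (2)). Your worry (b) about catching all ccc posets is thus not a side constraint to be balanced against (a); the two requirements directly contradict each other at $\ka=\aleph_1$.

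The paper's proof resolves this by moving one cardinal up, and this is not incidental: it iterates ccc posets of size $\aleph_1$ to length $\om_3$, taking $\ka=\om_2$, so that the $\aleph_1$-sized supports of the iterand names are small relative to the $\ka$-blocks. It then calls $\Gamma\su\om_3$ \emph{full} if for each $\ga\in\Gamma$ every condition in the name $\Poset_\ga$ has support inside $\Gamma\cap\ga$, and replaces your single tail-containing $V_\Gamma$ by the union $\mathfrak{M}_{(x,y,z)}=\bigcup_{\Gamma\in\mathcal{G}}V_\Gamma$, where $\mathcal{G}$ consists of all full, completely embedded $\Gamma$ agreeing on the block $[\mu(z),\mu(z)+\om_2)$ with one fixed $\Gamma_{(x,y,z)}\supseteq\mu(z)+\theta(x,y)$ satisfying $|\Gamma_{(x,y,z)}\sm\mu(z)|=\aleph_1$. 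Since fullness closures have size $\aleph_1<\mathrm{cf}(\om_2)$, the closure of the countable tail support of $h(z)$ meets the block in a bounded set, which is swallowed by the initial segment $\mu(z)+\eta$ once $\eta$ is large; and since $\aleph_1<\aleph_2$, the block is never covered by any $\Gamma\in\mathcal{G}$, which gives condition (2). Thus no engineering of the iteration or of the bookkeeping is needed at all --- the standard ${\rm MA}_{\aleph_1}$ iteration is analyzed as given --- whereas your plan requires a bespoke construction whose existence is precisely what is in doubt.
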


\begin{proof}
Obtain the model of ${\text MA}_{\aleph_1}$ by iterating to $\omega_3$
 with ccc partial orders of size $\aleph_1$ over a model of the Continuum Hypothesis.
  To be precise, let $\{\Poset_\alpha\}_{\alpha\in \omega_3}$ be  names for the ccc partial orders such that $\Rationals_\alpha$ is the iteration of  $\{\Poset_\xi\}_{\xi\in\alpha}$    and $\Poset_\alpha$ is a $\Rationals_\alpha$ name for a ccc partial order of cardinality $\aleph_1$. A set $\Gamma\subseteq \omega_3$ will be called {\em full} if
  for each $\gamma\in \Gamma$ all the conditions in the name $\Poset_\gamma$
  have support contained in $\Gamma\cap \gamma$. If $\Gamma$ is full, let $\Rationals_\Gamma$ be the iteration of only the partial orders $\Poset_\gamma$ for $\gamma\in \Gamma$.

  Cardinal arithmetic and the ccc guarantee that the partial order $\Rationals_{\omega_3}$ has the property that for any subset of $\Rationals_{\omega_3}$ of cardinality $\aleph_1$ is contained in completely embedded partial order of the form $\Rationals_\Gamma$ where $\Gamma$ is a full set of cardinality $\aleph_1$. Even more,
  for any $\xi\in \omega_3$ if $W\subseteq \Rationals_{\omega_3}$ is such that
  $W\setminus \Rationals_\xi$ has cardinality $\aleph_1$ it is possible to find
 a full $\Gamma$ such that $\Gamma \setminus \Rationals_\xi$ has cardinality $\aleph_1$ and $\Rationals_\Gamma$ is completely embedded in $\Rationals_{\omega_3}$.
  Using this, it is possible to mimic the proof of Theorem~\ref{t:p}.

  Let $G\subseteq \Rationals_{\omega_3}$ be generic and for any full $\Gamma\subseteq \omega_3$ such that $\Rationals_\Gamma$ is completely embedded in $\Rationals_{\omega_3}$ let $V_\Gamma = V[G\cap \Rationals_\Gamma]$.
 For any $x\in \Reals$ in $V[G]$ let $\mu(x)$ be the least ordinal such that $x \in V_{\mu(x)}$.

Given  $(x,y,z)\in \rcantorpar^3$ suppose first that there is no $\theta\in \omega_2$ such that $\mu(y) = \mu(x) + \theta$. In this case
 define ${\mathfrak M}_{(x,y,z)} = V_\xi$ where $\xi$ is the largest of $\mu(x)$, $\mu(y)$ and $\mu(z)$.  Otherwise, let $\theta(x,y)\in \omega_2$ be
such that $\mu(y) = \mu(x) + \theta(x,y)$.
There is some $\Gamma_{(x,y,z)}\subseteq \omega_3$ such that
\begin{enumerate}
\item $\mu(z)+ \theta(x,y)\subseteq \Gamma_{(x,y,z)}$
\item $| \Gamma_{(x,y,z)}\setminus \mu(z)| = \aleph_1$
\item  $\Gamma_{(x,y,z)}$ is full
\item $\Rationals_{\Gamma_{(x,y,z)}}$ is completely embedded in $\Rationals_{\omega_3}$.
\end{enumerate}
The let ${\mathcal G}$ be the family of all $\Gamma\subseteq \omega_3$ such that
\begin{enumerate}
\item $\Gamma \cap \mu(z) + \omega_2 = \Gamma_{(x,y,z)}\cap \omega_2$
\item  $\Gamma$ is full
\item $\Rationals_{\Gamma}$ is completely embedded in $\Rationals_{\omega_3}$.
\end{enumerate}
Let  ${\mathfrak M}_{(x,y,z)} = \bigcup_{G\in\mathcal G}V_{\Gamma}$ and note that it is a model of sufficiently much set theory to code Borel sets by reals.
 It is again clear that (1) and (2) of Lemma~\ref{l:m} hold.

To see that (3) holds suppose that $h:\Reals \to \Reals$ and $x\in \Reals$ are in $V[G]$.
For each $\eta\in \omega_2$ let $y_\eta\in \Reals$ be such that $\mu(y_\eta) = \mu(x) + \eta$.  Using the ccc, find $\beta$ so large that
$h(y_\eta)\in V_\beta$ for each $\eta \in \omega_2$.  Now let $z\in \Reals$ be such that $\mu(z) = \beta$ and find $\eta\in \omega_2$ large enough that $h(z) \in V_{\Gamma_{(x,y_\eta,z)}}$.
It follows that ${\mathfrak M}_{x,y_\eta,z} \supseteq V_{\Gamma_{(x,y_\eta,z)}}$ and hence
$\{h(y_\eta),h(z)\}\subseteq {\mathfrak M}_{x,y_\eta,z} $. Hence (3) of Lemma~\ref{l:m} is also satisfied and the result now follows from Lemma~\ref{l:m}.
\end{proof}

\section{Universal Functions of special kinds} \label{special}

Elementary functions in the calculus of two variables can
be obtained from addition $x+y$, the elementary functions of one variable
and closing under composition.  For example, $xy={1\over 2}((x+y)^2-x^2-y^2)$.
We might ask if there could be a universal function of the
form: 
$F(x,y)=k(x+y)$.  By this we mean that for any $G(x,y)$ we can find
$u(x)$ and $v(y)$ such that $G(x,y)=k(u(x)+v(y))$.

\begin{prop}
If there is a universal function, then there is one of the form
$F(x,y)=k(x+y)$,
where $k$ has the same complexity as the given universal function.
\end{prop}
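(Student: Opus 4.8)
The plan is to exploit the group structure on $2^\omega$ under coordinatewise addition modulo $2$, where $x+y$ is the bitwise XOR, together with the observation that although $x+y$ in general forgets information about $x$ and $y$ separately, a suitably zero-padded \emph{interleaved} encoding makes both coordinates recoverable from the sum. The given universal $F$ is already enough as the substituted-into function: by Definition~\ref{defunivfcn}, for every $G$ there is a single $g$ with $G(x,y)=F(g(x),g(y))$, so no passage through the pairing reduction of the Introduction is needed. The real work is to manufacture a $k$ of the same complexity as $F$ and two substitution functions $u,v$ realizing $G(x,y)=k(u(x)+v(y))$.

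First I would fix the standard de-interleaving maps on $2^\omega$. Let $\pi_0,\pi_1:2^\omega\to 2^\omega$ be the continuous projections $\pi_0(w)(n)=w(2n)$ and $\pi_1(w)(n)=w(2n+1)$, and let $e,o:2^\omega\to 2^\omega$ be the continuous encodings placing a real on the even (respectively odd) coordinates and $0$ elsewhere. The crucial algebraic point is that the padding zeros never collide under XOR: $e(x)+o(y)$ carries $x$ on its even coordinates and $y$ on its odd coordinates, so that $\pi_0(e(x)+o(y))=x$ and $\pi_1(e(x)+o(y))=y$ for all $x,y$.

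Then I would set $k(w)=F(\pi_0(w),\pi_1(w))$ and consider the candidate $F(x,y)=k(x+y)$. Given an arbitrary $G$, choose $g$ with $G(x,y)=F(g(x),g(y))$ and define $u(x)=e(g(x))$ and $v(y)=o(g(y))$. By the previous paragraph $\pi_0(u(x)+v(y))=g(x)$ and $\pi_1(u(x)+v(y))=g(y)$, whence $k(u(x)+v(y))=F(g(x),g(y))=G(x,y)$, which is exactly universality in the sense meant for the special form. For the complexity claim, note that $w\mapsto(\pi_0(w),\pi_1(w))$ is a homeomorphism of $2^\omega$ onto $2^\omega\times 2^\omega$; thus $k$ is $F$ composed with a homeomorphism and so lies at the same Borel level and the same Baire class as $F$.

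The construction is essentially routine once the interleaving encoding is chosen, so the only points demanding care --- the ``main obstacle'' --- are that the special form is genuinely achieved and that complexity is preserved. In particular I must use two distinct substitution functions $u$ and $v$ rather than a single one: since $\pi_0$ and $\pi_1$ are additive, with a single $h$ the even and odd parts of $h(x)+h(y)$ would each mix contributions from both $x$ and $y$, and recovery via $\pi_0,\pi_1$ would fail. This is harmless because the special form in the statement explicitly permits distinct $u$ and $v$, but it is the step where the argument would silently break if one insisted on a single substitution function, and it is worth flagging in the write-up.
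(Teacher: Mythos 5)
Your proof is correct and follows essentially the same route as the paper: encode $g(x)$ on the even coordinates and $h(y)$ on the odd coordinates so that both are recoverable from the XOR sum, then let $k$ be $F$ composed with the de-interleaving map. Your write-up is in fact slightly more explicit than the paper's on the complexity claim (noting that $w\mapsto(\pi_0(w),\pi_1(w))$ is a homeomorphism) and on why two substitution functions are needed, but the underlying construction is identical.
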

\proof
For simplicity assume that $x+y$ refers to the pointwise addition
in $2^\om$.  A similar argument can be given for ordinary addition
on the real line.

Suppose $F^*:2^\om\times 2^\om\to 2^\om$ is a universal function,
i.e, for every $f:2^\om\times 2^\om\to 2^\om$ there are
$g,h$ with $f(x,y)=F^*(g(x),h(y))$ all $x,y\in 2^\om$.  Given
any $u\in 2^\om$ let $u_0$ be $u$ shifted onto the even
coordinates, i.e, $u_0(2n)=u(n)$ and $u_0(2n+1)=0$.  Similarly
for $v\in 2^\om$ let $v_1$ be $v$ shifted onto the odd coordinates.
Note that $(u,v)$ is easily recovered from $u_0+v_1$.
Hence we can define $k$ by $k(w)=F^*(u,v)$ where $w=u_0+v_1$.
\qed

\begin{prop}
Suppose that there is a universal function $F:2^\om\times 2^\om\to 2^\om$.
Then there exists a function
$f:2^\om\to 2^\om$ such that for every symmetric
$H:2^\om\times 2^\om\to 2^\om$
there exists a $g:2^\om\to 2^\om$ such that $H(x,y)=f(g(x)+g(y))$
for every two distinct $x,y\in 2^\om$.  Furthermore if $F$ is
Borel, then $f$ is Borel.
\end{prop}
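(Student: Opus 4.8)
The plan is to exploit two features of the target form $f(g(x)+g(y))$. First, since addition on $2^\om$ is pointwise and hence commutative, $g(x)+g(y)$ depends only on the unordered pair $\{x,y\}$, so $f(g(x)+g(y))$ is automatically symmetric, matching the hypothesis that $H$ is symmetric. Second, \emph{all} information that $f$ may use must be carried by the single value $w=g(x)+g(y)$, because $f$ is fixed before $H$ is chosen; $f$ cannot ``look up'' anything about $H$. So I would choose $g$ (depending on $H$) so that $w$ determines, through a fixed decoder, the unordered pair of codes that the given universal $F$ assigns to $x$ and $y$, and then take $f$ to be ``decode $w$, then apply $F$.''

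First I would reduce to an injective representation. Given a symmetric $H$, universality of $F$ yields $\bar g$ with $H(x,y)=F(\bar g(x),\bar g(y))$. Replacing $F$ by the fixed function $\tilde F(\la a,s\ra,\la b,t\ra)=F(a,b)$ (Borel if $F$ is) and setting $g_1(x)=\la \bar g(x),x\ra$, I obtain an \emph{injective} $g_1$ with $H(x,y)=\tilde F(g_1(x),g_1(y))$ for all $x,y$. Injectivity is needed so that for distinct $x,y$ the value $g(x)+g(y)$ is never $\vec{0}$ (the diagonal of $+$), where no pair could be recovered.

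The crux is a fixed ``Sidon-type'' encoding $e:2^\om\to 2^\om$ from which the unordered pair $\{p,q\}$ is recoverable from $e(p)+e(q)$ by a fixed, indeed continuous, decoder. I would index the target coordinates by pairs $\la n,s\ra$ with $n<\om$ and $s\in 2^n$, and set $e(p)(\la n,s\ra)=1$ iff $s=p\res n$; thus $e(p)$ marks the branch of $p$ through the binary tree, and $e$ is continuous and injective. For $p\ne q$ with first difference at level $m$, the sum $e(p)+e(q)$ is empty below level $m$ and, at every level $n\ge m$, consists of exactly the two marks $\la n,p\res n\ra$ and $\la n,q\res n\ra$; following these two consistent chains of prefixes recovers $\{p,q\}$. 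I then set $g=e\circ g_1$ and define $f(w)$ by running this decoder on $w$ to get an unordered pair $\{p,q\}$ and outputting $\tilde F(p,q)$ in either order, with $f(w)=\vec{0}$ when $w$ is not of this form. The symmetry of $H$ makes the choice of order irrelevant, since $\tilde F(g_1(x),g_1(y))=H(x,y)=H(y,x)=\tilde F(g_1(y),g_1(x))$, so $f$ returns $H(x,y)$ for every distinct pair; and the injectivity of the sum map $\{p,q\}\mapsto e(p)+e(q)$ guarantees that no inconsistency arises, since distinct pairs producing the same $w$ would already carry the same $H$-value.

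Finally, for the Borel clause: $e$ and the branch-reconstruction decoder are continuous, the set of valid $w$ is Borel, and $\tilde F$ is Borel when $F$ is, so $f$ is Borel (with essentially the same level). I expect the main obstacle to be exactly this combinatorial device—producing a single $e$ and a fixed decoder that read an unordered pair off one XOR, and verifying that injectivity of $g_1$ excludes the degenerate sum $\vec{0}$. Once that is in place, the remainder is bookkeeping driven by commutativity of $+$ and the symmetry of $H$.
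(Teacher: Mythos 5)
Your proposal is correct and takes essentially the same route as the paper's proof: encode each real by a function supported on its branch through $2^{<\omega}$ so that the pointwise sum of two encodings determines the unordered pair, let $f$ be ``decode, then apply the universal function,'' and use the symmetry of $H$ to absorb the order ambiguity in the decoded pair. The only differences are implementation details --- the paper writes a copy of the code $h(x)$ on each piece $P_{h(x)\upharpoonright n}$ of a partition of $\omega$ indexed by $2^{<\omega}$ (so the common stem stays visible in the sum), and asserts the injectivity/never-zero normalization as a WLOG where you build injectivity explicitly via a pairing trick --- but the underlying idea is identical.
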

\proof
Let $P_s\su\om$ for $s\in 2^{<\om}$ partition $\om$ into infinite
sets.  We say that $y:P_s\to 2$ codes $x:\om\to 2$ iff
$y(a_n)=x(n)$ where $a_0<a_1<a_2<\ldots$ is the increasing
listing of $P_s$.

For any $x\in 2^\om$ define $q(x)\in 2^\om$ so that
$q(x)\res P_{x\res n}$ codes $x$ for
every $n<\om$ and $q(x)\res P_s$ is identically 0 for
any $s$ which is not an initial segment of $x$.

By assumption for any $H:2^\om\times 2^\om\to 2^\om$ there exists
$h$ such that $$H(x,y)=F(h(x),h(y))$$ for all $x,y\in 2^\om$.
Define $g(x)=q(h(x))$.  Without loss of generality we may assume
that $h$ is one-to-one and never identically 0.
Notice for $x\neq y$ that we may easily recover
$h(x)$ and $h(y)$ from $q(x)+q(y)$.  (There will be exactly two
infinite paths in the set of all $s\in 2^{<\om}$ such that
$(q(x)+q(y))\res P_s$ is not identically $0$).   Hence, we
may define $f$ so that
$$f(g(x)+g(y))=F(h(x),h(y)).$$

\qed

\begin{prop}
There does not exist a Borel function $F:2^\om\times 2^\om\to 2^\om$
such that for every Borel $H:2^\om\times 2^\om\to 2^\om$ there
exists Borel $g,h:2^\om\to 2^\om$ with
$$H(x,y)=F(g(x),h(y))$$
for every $x,y\in 2^\om$.
\end{prop}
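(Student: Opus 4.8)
The plan is to refute any purported $F$ by a complexity argument localized to perfect rectangles. Suppose toward a contradiction that $F$ is Borel and works for every Borel $H$, and let $\al_0<\om_1$ be such that $F$ is at the $\al_0$-level, i.e.\ each bit-set $\{(u,v)\st F(u,v)(n)=1\}$ is ${\bf\Sigma}^0_{\al_0}$. I will construct a single Borel $H$ that cannot be represented. First I would arrange $H$ to have pairwise distinct rows and pairwise distinct columns. Then any Borel $g,h$ with $H(x,y)=F(g(x),h(y))$ must be injective: if $g(x)=g(x')$ then the $x$- and $x'$-rows of $H$ coincide, contradicting distinctness, and symmetrically for $h$ and the columns.

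Given such injective Borel $g,h$, I pass to perfect sets on which they are well behaved. Each of $g,h$ is continuous on a comeager $G_\de$ set; choose perfect $P,Q\su 2^\om$ inside these sets. Since $P,Q$ are compact and $g,h$ are injective and continuous on them, $\phi:=g\res P$ and $\psi:=h\res Q$ are homeomorphisms onto the compact (hence closed) sets $g(P),h(Q)$. Consequently $\phi\times\psi:P\times Q\to g(P)\times h(Q)$ is a homeomorphism, and for every $n$ the set $\{(x,y)\in P\times Q\st H(x,y)(n)=1\}$ is the $(\phi\times\psi)$-preimage of the trace on $g(P)\times h(Q)$ of the ${\bf\Sigma}^0_{\al_0}$ set $\{F(\cdot)(n)=1\}$. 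Traces of ${\bf\Sigma}^0_{\al_0}$ sets are ${\bf\Sigma}^0_{\al_0}$ in the subspace, and homeomorphisms preserve the Borel hierarchy, so each bit-set of $H\res(P\times Q)$ is ${\bf\Sigma}^0_{\al_0}$ relative to $P\times Q$. Thus \emph{whenever $H$ is representable, there exist perfect $P,Q$ with $H\res(P\times Q)$ at the $\al_0$-level}.

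The contradiction comes from building $H$ whose restriction to \emph{every} perfect rectangle has level strictly above $\al_0$. For this it suffices to produce a Borel $S\su 2^\om\times 2^\om$ with $S\cap(P\times Q)\notin{\bf\Sigma}^0_{\al_0}(P\times Q)$ for all perfect $P,Q$, and then set, via a fixed homeomorphism $2\times 2^\om\times 2^\om\cong 2^\om$, $H(x,y)=\la\chi_S(x,y),\la x,y\ra\ra$. The encoded copies of $x$ and $y$ force distinct rows and columns and are continuous, so the bit-set coming from the first coordinate is exactly $S$, which carries the complexity. To obtain $S$ I reduce to one variable: assume there is a Borel $C\su 2^\om$ with $C\cap R\notin{\bf\Sigma}^0_{\al_0}(R)$ for every perfect $R$. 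Fix the interleaving homeomorphism $\theta:2^\om\times 2^\om\to 2^\om$ given by $\theta(x,y)=(x(0),y(0),x(1),y(1),\dots)$ and put $S=\theta^{-1}(C)$. For perfect $P,Q$ the image $\theta(P\times Q)$ is perfect and $S\cap(P\times Q)=\theta^{-1}(C\cap\theta(P\times Q))$, whose relative class equals that of $C$ on the perfect set $\theta(P\times Q)$; by the choice of $C$ this is not ${\bf\Sigma}^0_{\al_0}$, as required.

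The hard part is the existence of the everywhere-complex set $C$: a single Borel subset of $2^\om$ whose restriction to every perfect set still fails to be ${\bf\Sigma}^0_{\al_0}$. I expect to construct it by a fusion/diagonalization, planting a ${\bf\Pi}^0_{\al_0}$-complete pattern cofinally along the splitting nodes of a perfect tree so that the hardness cannot be washed out by passing to any perfect subset. Everything else — the reduction to injective $g,h$, the passage to perfect sets on which $g,h$ are homeomorphisms, and the invariance of the $\al_0$-level under the product homeomorphism $\phi\times\psi$ — is routine once $C$ is in hand, and the argument is uniform in $\al_0$, so it applies to the fixed level of any given Borel $F$.
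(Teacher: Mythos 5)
Your preparatory steps are fine (distinct rows and columns forcing injectivity, continuity of Borel functions on a perfect set, invariance of the level under the product homeomorphism $\phi\times\psi$), but the object on which the whole plan rests --- the ``everywhere-complex'' Borel set $C$ --- does not exist, and this is not because the fusion is delicate: it is provably impossible, already at level $\al_0=1$. Given any Borel $C\su 2^\om$, at least one of $C$, $2^\om\sm C$ is uncountable, and every uncountable Borel set contains a perfect set $R$ (the perfect set property). Then $C\cap R$ is either all of $R$ or empty, hence relatively clopen in $R$, hence ${\bf\Sigma}^0_{\al_0}$ relative to $R$ for every $\al_0\geq 1$; no planting of complete patterns along splitting nodes can prevent this, since the perfect subset may simply be chosen inside $C$ or inside its complement. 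The same obstruction blocks any direct construction of the planar set $S$: by Galvin's perfect-set partition theorem for Borel colorings of pairs (see Kechris \cite{kechris}), every Borel $S\su 2^\om\times 2^\om$ admits perfect sets $P_0,P_1$ with $P_0\times P_1$ either contained in $S$ or disjoint from it. Consequently the implication you correctly establish in your second paragraph (``representable $\Rightarrow$ at level $\al_0$ on some perfect rectangle'') is true of \emph{every} Borel $H$, representable or not, so no counterexample of your kind can exist.

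The structural reason the approach fails is that it tames both parameter functions symmetrically, confining both coordinates to perfect sets; localization to a perfect rectangle destroys all Borel complexity, as above. The paper's proof is asymmetric and keeps one coordinate fully free. It takes $U\su 2^\om\times 2^\om$ a universal ${\bf\Sigma}^0_{\al+3}$ set and $H$ its characteristic function (coded into $2^\om$), and, given Borel $g,h$, restricts only $h$ to a perfect set $P$ on which it is continuous. Since the sections $U_x$ run over \emph{all} ${\bf\Sigma}^0_{\al+3}$ subsets of $2^\om$ as $x$ ranges over the whole space, one may fix $x_0$ with $U_{x_0}\su P$ not ${\bf\Delta}^0_{\al+3}$ relative to $P$; crucially $x_0$ is not confined to any perfect set, and no regularity of $g$ is used at all --- only the single value $g(x_0)$. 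Then $q(y)=F(g(x_0),h(y))$ restricted to $P$ is a level-$\al$ function (a level-$\al$ section of $F$ composed with the continuous $h\res P$), yet $U_{x_0}=q^{-1}(1)$ would then be at worst ${\bf\Pi}^0_{\al+1}$ relative to $P$, contradicting the choice of $x_0$. The freedom of the row index to range over all of $2^\om$ is exactly what your symmetric reduction gives away and cannot recover; if you rewrite your argument around this one-sided section trick, it goes through.
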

\proof
Suppose $F$ is a Baire level $\al$ function and let
$U\su 2^\om\times 2^\om$ be a universal ${\bf\Sigma}_{\al+3}^0$
set.  Let $H$ be the characteristic function of $U$.  Given
any Borel $g,h$ let $P\su 2^\om$ be perfect set on which
$h$ is continuous.  Fix $x_0$ so that $U_{x_0}\su P$ is
not ${\bf \Delta}_{\al+3}^0$.  But if we define
$$q:P\to 2^\om \mbox{ by } q(y)=F(g(x_0),h(y))$$
then $q$ is Baire level $\al$ and $U_{x_0}=q^{-1}(1)$
which is a contradiction.
\qed
This proof is similar to Mansfield and Rao's Theorem
\cite{mansfield,mansfield2,rao2} that
the universal analytic set in the plane is not in the $\si$-algebra
generated by rectangles with measurable sides.  See also,
Miller \cite{millerrectang}.

\begin{ques}
Does there always exists a Borel function $$F:2^\om\times 2^\om\to 2^\om$$
such that for every Borel $H:2^\om\times 2^\om\to 2^\om$ there
exists $g,h:2^\om\to 2^\om$ with
$$H(x,y)=F(g(x),h(y))$$
for every $x,y\in 2^\om$?
\end{ques}
Maybe Louveau's Theorem \cite{louv} is relevant for this question.

Stevo Todorcevic has noted the following version of universal
functions:

There exists continuous functions $F_n:2^\om\times 2^\om\to 2^\om$ for
$n<\om$ with the property that for every $G:\pp\times\pp \to 2^\om$ there
exists $h:\pp\to 2^\om$ such that for every  $\al,\be<\pp$
$$G(\al,\be)=\lim_{n\to\infty}F_n(h(\al),h(\be))$$
Where $\pp$ is the least cardinal for which MA $\si$-centered
fails.

We prove that this sort of universal function is equivalent
to a level 2 universal function.

\def\csqr{2^\om\times 2^\om}

\begin{prop}
For any cardinal $\ka$ the following are equivalent:

(1) There exists continuous functions $F_n:\csqr\to 2^\om$ for
$n<\om$ with the property that for every $G:\ka\times\ka \to 2^\om$ there
exists $h:\ka\to 2^\om$ such that
$$G(\al,\be)=\lim_{n\to\infty}F_n(h(\al),h(\be)) \mbox{ for every }
\al,\be \in \ka.$$

(2) There exists a level-2 function $F:\csqr\to 2^\om$
with the property that for every $G:\ka\times\ka \to 2^\om$ there
exists $h:\ka\to 2^\om$ such that
$$G(\al,\be)=F(h(\al),h(\be)) \mbox{ for every }
\al,\be \in \ka.$$

\end{prop}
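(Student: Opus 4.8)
The plan is to prove the two implications separately; $(1)\Rightarrow(2)$ is routine and $(2)\Rightarrow(1)$ carries all the content. For $(1)\Rightarrow(2)$, given continuous $F_n$ I would define $F:\csqr\to\rcantor$ by setting $F(u,v)(k)=1$ iff $F_n(u,v)(k)=1$ for all but finitely many $n$, and $F(u,v)(k)=0$ otherwise; that is, $F(u,v)=\lim_n F_n(u,v)$ wherever the coordinatewise limit exists, with the divergent coordinates set to $0$. Since each $F_n$ is continuous, every set $\{(u,v)\st F_n(u,v)(k)=1\}$ is clopen, so $\{(u,v)\st F(u,v)(k)=1\}=\bigcup_N\bigcap_{n\ge N}\{(u,v)\st F_n(u,v)(k)=1\}$ is ${\bf\Sigma}^0_2$ and hence $F$ is a level-$2$ function. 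For universality, given $G:\ka\times\ka\to\rcantor$ take the $h$ supplied by (1): on every pair $(h(\al),h(\be))$ the limit exists and equals $G(\al,\be)$, so $F(h(\al),h(\be))=G(\al,\be)$ and the same $h$ witnesses (2).

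For $(2)\Rightarrow(1)$ the difficulty is that a level-$2$ function need not be a pointwise limit of continuous functions: the bit-set $\{F(u,v)(k)=1\}$ is ${\bf\Sigma}^0_2$ but in general not ${\bf\Delta}^0_2$, and the characteristic function of a properly ${\bf\Sigma}^0_2$ set is not Baire class $1$. So no choice of continuous $F_n$ can converge to $F$ on all of $\csqr$; convergence can only be arranged on a carefully chosen subset, and supplying that subset is exactly what the re-encoding $h'$ is for. First I would reduce the pointclass: for each $k$, since $\{(u,v)\st F(u,v)(k)=1\}$ is ${\bf\Sigma}^0_2$, completeness of the canonical $F_\sigma$ set $E=\{z\in\rcantor\st z(i)=1 \text{ for only finitely many }i\}$ gives a continuous $c_k:\csqr\to\rcantor$ with $F(u,v)(k)=1$ iff $c_k(u,v)\in E$.

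The naive attempt $F_n(u,v)(k)=1$ iff $c_k(u,v)(n)=0$ converges to the correct value whenever $c_k(u,v)$ is eventually constant, but oscillates precisely on the points where $c_k(u,v)$ has infinitely many $1$s and infinitely many $0$s, which are points where $F(u,v)(k)=0$ while the sequence fails to converge. The hard part will be to kill this oscillation. The plan is to use universality of $F$ to first obtain $h$ with $F(h(\al),h(\be))=G(\al,\be)$, and then to build $h'$ whose range consists of ``well-coded'' reals on which the sequences $c_k$ stabilise, so that $\lim_n F_n(h'(\al),h'(\be))(k)$ exists and equals $G(\al,\be)(k)$.

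The crux, and the step I expect to be the main obstacle, is that whether $c_k(h(\al),h(\be))$ has finitely many $1$s, and if so how many, is a property of the \emph{pair} $(\al,\be)$ and cannot be stored in $h'(\al)$ or $h'(\be)$ separately. I expect to resolve this exactly as in the proof of Theorem~\ref{mainthm}: since the target values $G(\al,\be)$ are known in advance, encode into $h'(\al)$ a code for the entire $\al$-row $(G(\al,\be))_{\be}$ through sections of a universal ${\bf\Sigma}^0_2$ set, and into $h'(\be)$ a selector for the $\be$-th section, and then design the continuous $F_n$ so that, reading the interaction of these codes, they produce for each $k$ a bit stream that is eventually constant and equal to $G(\al,\be)(k)$. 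In effect the ${\bf\Sigma}^0_2$ ``union over witnesses'' that defines the level-$2$ function is to be performed by the limit $n\to\infty$ rather than by a single ${\bf\Sigma}^0_2$ formula, with the re-encoding supplying, coordinate by coordinate, the witness that the limit then reads off; the result then follows by reducing an arbitrary $G$ to this coded form.
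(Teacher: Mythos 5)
Your $(1)\Rightarrow(2)$ direction is correct and is exactly the paper's argument: $\{(u,v)\st F(u,v)(k)=1\}=\bigcup_N\bigcap_{n\ge N}\{(u,v)\st F_n(u,v)(k)=1\}$ is $F_\sigma$, and the same $h$ witnesses (2). The problem is $(2)\Rightarrow(1)$, where what you offer is a plan rather than a proof, and the plan breaks at exactly the point you yourself flag as the crux. Your scheme only ever produces witnesses for the \emph{positive} side of each bit: if $G(\al,\be)(k)=1$ then $(h(\al),h(\be))$ lies in the $F_\sigma$ set $P_k=\{(u,v)\st F(u,v)(k)=1\}$ and there is a closed piece whose index one might hope to code; but if $G(\al,\be)(k)=0$ the pair merely \emph{fails} to lie in $P_k$, and non-membership in a ${\bf\Sigma}^0_2$ set is a ${\bf\Pi}^0_2$ fact with no witness to supply, coordinatewise or otherwise. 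Since for any sequence of continuous functions the set where the $k$-th coordinate converges to $0$ is itself $F_\sigma$, any successful construction \emph{must} place the bit-$0$ pairs inside an $F_\sigma$ set disjoint from the bit-$1$ region; your one-sided coding cannot produce such a set. Moreover, the machinery you invoke from Theorem~\ref{mainthm} is unavailable here: that proof uses its hypothesis ($2^{<\cc}=\cc$ together with every subset of the plane lying in the $\si$-algebra generated by the abstract rectangles) to build a set $X$ all of whose small subsets are relatively ${\bf\Sigma}^0_\al$, and it is that hypothesis, not any feature of the universal function, which lets rows be coded by sections of a universal set. The present proposition is a ZFC equivalence for arbitrary $\ka$, with nothing available but $F$ itself.

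The idea you are missing is to make the negative side positive by applying universality of $F$ \emph{twice}: once to $G$ and once to its bitwise complement $G_1(\al,\be)(n)=1-G(\al,\be)(n)$, obtaining $h_0$ and $h_1$, and setting $h(\ga)=\la h_0(\ga),h_1(\ga)\ra$. Define $F_\sigma$ sets $P^i_n$ (for $i=0,1$) in $2^\om\times 2^\om$ by $(\la u_0,u_1\ra,\la v_0,v_1\ra)\in P^i_n$ iff $F(u_i,v_i)(n)=1$. Now every relevant pair lies in $P^0_n\sm P^1_n$ when the bit is $1$ and in $P^1_n\sm P^0_n$ when the bit is $0$, so \emph{both} outcomes are witnessed by membership in an $F_\sigma$ set. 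The reduction property for ${\bf\Sigma}^0_2$ yields disjoint $F_\sigma$ sets $Q^i_n\su P^i_n$ with $Q^0_n\cup Q^1_n=P^0_n\cup P^1_n$; write $Q^i_n=\bigcup_k C^i_{n,k}$ as increasing unions of closed sets, separate the disjoint closed sets $C^0_{n,k}$ and $C^1_{n,k}$ by a clopen set $D_{n,k}$, and define the continuous functions by $F_k(u,v)(n)=1$ iff $(u,v)\in D_{n,k}$. On every pair $(h(\al),h(\be))$ each coordinate is then eventually constant with the correct value, which is exactly what (1) requires; off these pairs convergence may fail, but that is irrelevant. This symmetric double use of universality, combined with reduction and clopen separation in the zero-dimensional space $2^\om\times 2^\om$, is the content of the paper's proof and is not recoverable from your one-sided coding scheme.
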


\proof
Recall that $F$ is level 2 means that for every $n$ the
set $$\{(x,y)\;:\; F(x,y)(n)=1\} \mbox{ is }F_\si.$$

\noindent $(1)\rightarrow (2)$. Given the sequence $F_k$ of continuous functions
define:

$F(x,y)(n)=1$ iff $F_k(x,y)(n)=1$ for all but finitely many $k<\om$.

\bigskip
\noindent $(2)\rightarrow (1)$.
For any $G$ let $G_0$ be $G$ and define $G_1$ by
$G_1(\al,\be)(n)=1-G_0(\al,\be)(n)$.  That is, we switch
$0$ and $1$ on every coordinate of the output.
It follows that we have $h_0$ and $h_1$ such that
for ever $\al,\be<\ka$ and $n<\om$

\par\noindent $G(\al,\be)(n)=1 $ implies
\par $F(h_0(\al),h_0(\be))(n)=1$ and $F(h_1(\al),h_1(\be))(n)=0$
\par\noindent $G(\al,\be)(n)=0 $ implies
\par $F(h_0(\al),h_0(\be))(n)=0$ and $F(h_1(\al),h_1(\be))(n)=1$

For each $n$ define the pair of (nondisjoint) $F_\si$ sets
$P^0_n$ and $P^2_n$ by
$$(\pair(u_0,u_1),\pair(v_0,v_1))\in P^i_n \rmiff F(u_i,v_i)(n)=1$$
By the reduction property for each $n$ there are disjoint $F_\si$ sets
$Q^0_n, Q^1_n$ with
$Q^i_n\su P^i_n$ and $Q^0_n\cup Q^1_n =  P^0_n\cup P^1_n$.
Write each $Q^i_n$ as an increasing sequence of closed sets
$Q^i_n=\bigcup_k C^i_{n.k}$.  Since $C^0_{n.k}$ and $C^1_{n.k}$
are disjoint closed sets, there is a clopen set $D_{n.k}$
with $C^0_{n.k}\su D_{n.k}$ and $C^1_{n.k}$ disjoint from
$D_{n.k}$.

Define the continuous map $F_k$ as follows:
$$F_k(u,v)(n)=1 \rmiff (u,v)\in D_{n,k}$$

Now we verify that this works.  Given $G$ take $h_0$ and $h_1$
as above and define $h(\ga)=\la h_0(\ga),h_1(\ga)\ra$.  Then
for any $\al,\be,n$

If $G(\al,\be)(n)=1$, then $\la h(\al),h(\be)\ra\in P_n^0\sm P_n^1$
so  $\la h(\al),h(\be)\ra\in Q_n^0$ and so
$F_k(\la h(\al),h(\be)\ra)(n)=1$ for all but finitely many
$k$.

If $G(\al,\be)(n)=0$, then $\la h(\al),h(\be)\ra\in P_n^1\sm P_n^0$
so  $\la h(\al),h(\be)\ra\in Q_n^1$ and so
$F_k(\la h(\al),h(\be)\ra)(n)=0$ for all but finitely many
$k$.
\qed

Davies \cite{davies} showed that the continuum hypothesis
implies that the function
$$F(\vec{x},\vec{y})=\sum_{n<\om} x_ny_n$$
has a universal property:
for every $H:\rr\times\rr\to \rr$ there
are functions $f_n,g_n$ for $n<\om$ such that
$$H(x,y)=\sum_{n<\om}f_n(x)g_n(y)$$
for all $x,y\in \rr$.  Moreover the sum has only finitely
many nonzero terms.
Shelah \cite{roslan} remarks that Davies result is false in the
Cohen real model.

\section{Abstract Universal Functions}\label{abstract}

\begin{theorem} \label{existuniv}
If $\ka$ is an infinite cardinal such that $2^{<\ka}=\ka$, then there
is a universal function
$F:\ka \times \ka\to \ka$.
\end{theorem}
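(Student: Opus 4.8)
The plan is to build $F$ directly, with a single witness $g$ exactly as in Definition~\ref{defunivfcn}, by a diagonal row/column encoding (so no appeal to the asymmetric-to-symmetric reduction is even needed). The intuition is that, given a target $G$, the code $g(\alpha)$ should carry a complete record of the $\alpha$-th row \emph{up to and including the diagonal} together with the $\alpha$-th column \emph{strictly above the diagonal}. Then, for any pair $(\alpha,\beta)$, exactly one of $g(\alpha),g(\beta)$ ``owns'' the cell $(\alpha,\beta)$ — the row-owner when $\beta\le\alpha$, the column-owner when $\beta>\alpha$ — and $F$ will be defined so that it reads the stored value off the correct owner after comparing the two indices.

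Concretely, for $\alpha<\kappa$ write $R_\alpha=\langle G(\alpha,\xi):\xi\le\alpha\rangle$ and $C_\alpha=\langle G(\xi,\alpha):\xi<\alpha\rangle$, so $R_\alpha\colon\alpha+1\to\kappa$ and $C_\alpha\colon\alpha\to\kappa$. The hypothesis is used to fix, once and for all, a bijection $e$ between $\kappa$ and the set of all triples $(\delta,s,t)$ with $\delta<\kappa$, $s\colon\delta+1\to\kappa$, and $t\colon\delta\to\kappa$; write $e^{-1}(a)=(\delta_a,s_a,t_a)$. I then define $F\colon\kappa\times\kappa\to\kappa$ by
$$F(a,b)=\left\{\begin{array}{ll} s_a(\delta_b) & \mbox{if }\delta_b\le\delta_a,\\ t_b(\delta_a) & \mbox{if }\delta_b>\delta_a,\end{array}\right.$$
using the value $0$ on any argument outside the range of $e$. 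Given an arbitrary $G\colon\kappa\times\kappa\to\kappa$, set $g(\alpha)=e(\alpha,R_\alpha,C_\alpha)$, so that $\delta_{g(\alpha)}=\alpha$, $s_{g(\alpha)}=R_\alpha$, and $t_{g(\alpha)}=C_\alpha$.

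Verification is then immediate: for $\beta\le\alpha$ we get $F(g(\alpha),g(\beta))=R_\alpha(\beta)=G(\alpha,\beta)$, and for $\beta>\alpha$ we get $F(g(\alpha),g(\beta))=C_\beta(\alpha)=G(\alpha,\beta)$ (this is where the careful split of $R$ and $C$ at the diagonal pays off — each cell is recorded in exactly one of the two sequences, and the domains $\alpha+1$ and $\beta$ are exactly what the comparison of indices selects). Hence $G(\alpha,\beta)=F(g(\alpha),g(\beta))$ for all $\alpha,\beta<\kappa$, so $F$ is universal. The only substantive step — and the place I expect the real work to sit — is the existence of the bijection $e$, i.e.\ that there are exactly $\kappa$ such triples. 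Counting, the triples with fixed first coordinate $\delta$ number $\kappa^{|\delta|+1}\cdot\kappa^{|\delta|}=\kappa^{|\delta|}$, so the total is $\sum_{\delta<\kappa}\kappa^{|\delta|}=\kappa^{<\kappa}$; thus the construction needs $\kappa^{<\kappa}=\kappa$. For regular $\kappa$ this is exactly what the hypothesis delivers: if $\lambda<\kappa$ then every $f\colon\lambda\to\kappa$ is bounded, whence $\kappa^\lambda=\sup_{\mu<\kappa}\mu^\lambda\le\kappa\cdot\sup_{\mu<\kappa}2^{\mu\cdot\lambda}=\kappa$ using $2^{<\kappa}=\kappa$. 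This cardinal computation is the sole point at which the assumption enters; everything else is bookkeeping about decoding.
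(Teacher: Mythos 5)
Your proof is correct and is essentially the paper's own argument: the paper also splits the data at the diagonal (there, $h(\beta)$ codes the $\beta$-th column up to and including the diagonal and $g(\alpha)$ codes the $\alpha$-th row strictly below it), has $F$ compare the two coded indices and decode from the ``owner,'' and your bijection $e$ plays precisely the role of the paper's family $\rho_\alpha$ of functions realizing every small function as a restriction, composed with a pairing function. The only differences are cosmetic: you obtain the one-witness form of Definition~\ref{defunivfcn} directly, whereas the paper produces the two-witness form and leans on the reduction from the introduction; and you make explicit the reliance on $\kappa^{<\kappa}=\kappa$ (equivalently, regularity of $\kappa$ given the hypothesis $2^{<\kappa}=\kappa$), which the paper's choice of the $\rho_\alpha$ requires equally, but silently.
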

\proof
Choose $\rho_\al:\ka\times\ka\to\ka$ for $\al<\ka$ with the
property that for every $\be<\ka$ and $k:\be\to\ka$ there
exists a $\al<\ka$ with $k=\rho_\al\res\be$.
Let $\pair(,):\ka\times\ka\to\ka$ be a bijective pairing function.
Define
$F:\ka\times\ka\to\ka$ as follows:
 $$F(\pair(\al_0,\al_1),\pair(\be_0,\be_1))=\left\{
\begin{array}{ll}
\rho_{\be_1}(\al_0) & \mbox{ if } \al_0\leq\be_0 \\
\rho_{\al_1}(\be_0) & \mbox{ if } \al_0>\be_0 \\
\end{array}\right.$$
To see that $F$ is universal, let $H:\ka\times\ka\to\ka$
be arbitrary.  For each $\be$ choose $h(\be)$ so
that $H(\al,\be)=\rho_{h(\be)}(\al)$ for all $\al\leq\be$.
Similarly, choose $g(\al)$ so that
$H(\al,\be)=\rho_{g(\al)}(\be)$ for all $\be<\al$.
If follows that n
$$H(\al,\be)=F(\pair(\al,g(\al)),\pair(\be,h(\be)))$$ for
all $\al,\be<\ka$.
\qed

\begin{remark}\label{remarkomega}
For example, there is a universal $F:\om\times\om\to\om$.
\end{remark}

\begin{remark}
Theorem \ref{existuniv} is probably just a special case of
Theorem 6 of Rado \cite{rado}.
\end{remark}

\begin{define}
Let $\fin(X)$ be the partial order of  partial functions
from a finite subset of $X$ into 2.  Let $\ctbl(Y)$ be the
partial order\footnote{These posets are denoted
in Kunen\cite{kunen} by $Fn(X,2)$ and $Fn(Y,2,\om_1)$.}
of countable partial functions
from $Y$ into 2.
\end{define}

\begin{theorem} It is relatively consistent with ZFC that
there is no universal function $F:\cc \times \cc\to \cc$.
\end{theorem}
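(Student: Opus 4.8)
The plan is to force a model in which $2^{<\cc}>\cc$, since by Theorem~\ref{existuniv} this inequality is \emph{necessary} for the nonexistence of a universal function (when $2^{<\cc}=\cc$ one exists). A convenient target is $\cc=\aleph_2$ together with $2^{\aleph_1}=\aleph_3$. First I would force over a model of GCH with $\ctbl(\aleph_3\times\om_1)$, which is $\om_1$-closed, adds no reals, and makes $2^{\aleph_1}=\aleph_3$; then I would add $\aleph_2$ Cohen reals with $\fin(\aleph_2\times\om)$, which has size $\aleph_2$, forces $\cc=\aleph_2$, and preserves $2^{\aleph_1}=\aleph_3$. In the resulting model $2^{<\cc}=2^{\aleph_1}=\aleph_3>\aleph_2=\cc$, so Theorem~\ref{existuniv} does not apply; but of course that alone proves nothing, and the real work is to show that no universal $F$ survives.

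The strategy for defeating universality is the one isolated in Lemma~\ref{l:m} and exploited in Theorem~\ref{t:p}: given a candidate $F:\cc\times\cc\to\cc$ (now with $\cc$ identified with $\Reals$, so that the \emph{values} $F(u,v)$ are reals), I would stratify $V[G]$ into intermediate models $V_\Gamma$ indexed by supports of the product, and, for a putative witness $h:\Reals\to\Reals$, locate two reals $y,z$ together with a submodel $\mathfrak M$ over which a \emph{fresh} Cohen real is added. Defining the diagonalizing $G(y,z)$ to be that fresh real then guarantees $G(y,z)\notin\mathfrak M$. If one can also guarantee $F(h(y),h(z))\in\mathfrak M$, then $G(y,z)\neq F(h(y),h(z))$ and this $h$ fails; ranging over all $h$ shows $F$ is not universal, exactly as in the proof of Lemma~\ref{l:m}.

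The hard part — and the essential difference from the Borel case — is precisely the step $F(h(y),h(z))\in\mathfrak M$. In Lemma~\ref{l:m} this was immediate because a Borel $F$ is coded by a single real and hence lies in $\mathfrak M$; an abstract $F$ is a function on $\cc$-many reals, its natural name has support of size $\cc$, and it belongs to no small $V_\Gamma$. Thus I cannot place $F$ itself into $\mathfrak M$, only hope to \emph{locally reconstruct} the single value $F(h(y),h(z))$. The plan here is to use the ccc to capture $h$ on a countable set of coordinates, to use that there are $\cc^+$-many mutually generic Cohen reals in order to choose the pair $(y,z)$ so that $h(y),h(z)$ lie in $\mathfrak M$ while the coordinate carrying $G(y,z)$ is generic over the coordinates deciding $F(h(y),h(z))$, and then to argue that the value $F(h(y),h(z))$ — being a single real pinned down by $h(y),h(z)$ together with a small slice of the name for $F$ — already lies in $\mathfrak M$. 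I expect this capturing step to be the main obstacle, and I suspect it may force a redesign of the poset (so that every function $\cc^2\to\cc$ is reconstructible along enough of its diagonal), since for a wholly arbitrary $F$ a single value need not depend on only a small part of the name. It is worth emphasizing that a naive cardinality count cannot substitute for this: because $\cc^{<\cc}=2^{<\cc}$, the bound $\cc^{\aleph_1}=2^{\aleph_1}$ on the number of restrictions $h\res\aleph_1$ exactly matches the number $2^{\aleph_1}$ of potential targets, so the genericity/freshness argument, rather than counting, must carry the whole proof.
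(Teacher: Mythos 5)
Your model is the paper's model (countable conditions forcing $2^{\aleph_1}=\aleph_3$, then $\aleph_2$ Cohen reals), and your opening observation that $2^{<\cc}>\cc$ is necessary is correct. But the proof has a genuine gap, and it is exactly the one you flag yourself: you never get $F(h(y),h(z))$ into the small model ${\mathfrak M}$. Your hope that this single value is ``pinned down by $h(y),h(z)$ together with a small slice of the name for $F$'' is unfounded: an arbitrary $F$ admits no such local reconstruction, and no redesign of the Cohen part can create one, because any stratification along a forcing of length $\cc$ leaves the name of $F$ cofinal in it. There is also a factual slip that matters: the $\cc^+$-many mutually generic fresh objects in this model are not Cohen reals (the Cohen part has only $\cc$ coordinates, and the countably closed part adds no reals at all); they are subsets of $\om_1$ added by $\ctbl(\om_3)$, and the correct argument diagonalizes with those, not with reals.

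The paper's proof is not a Lemma~\ref{l:m}-style argument; it turns the problem sideways in three ways your proposal is missing. First, it stratifies along the countably closed forcing $\ctbl(\om_3)$, whose length $\om_3$ strictly exceeds $\cc=\om_2$: viewing $N=M[G][H]$ as obtained by forcing with $\ctbl(\om_3)^M$ over $M[H]$, this poset still has the $\om_2$-cc there, so the \emph{entire} function $F$ (an object of size $\cc$ once $\cc$ is identified with the ordinal $\om_2$) has a name of size $\om_2<\om_3$ and lies in an initial segment $M[H][G\res\ga]$ for some $\ga<\om_3$. Thus all of $F$ is captured, which is precisely what cannot be done along a length-$\cc$ forcing. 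Second, it refutes a weaker, asymmetric universality: it suffices to produce one $f:\om_2\times\om_1\to 2$, read off the generic above $\ga$ by $f(\al,\be)=G(\ga+\om_1\cdot\al+\be)$, which is not of the form $F(g_1(\al),g_2(\be))$; the fresh diagonalizing objects are the sections $f(\al,\cdot)$, i.e.\ subsets of $\om_1$. Third --- the trick that kills the remaining quantifier --- one fixes a single $\al_0$ and uses only the \emph{ordinal} $g_1(\al_0)$, never the function $g_1$; so besides $F$ the only parameter that must be captured in the small model $M[H][G\res(\ga\cup I)]$ is $g_2$, a function of size $\om_1$, which the $\om_2$-cc provides. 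Choosing $\al_0$ so that the block $D=\{\ga+\om_1\cdot\al_0+\be\st\be<\om_1\}$ is disjoint from $\ga\cup I$, the equation $f(\al_0,\be)=F(g_1(\al_0),g_2(\be))$ would compute the fresh set $G\res D$ inside $M[H][G\res(\ga\cup I)]$, a contradiction. Without these three moves --- capture of all of $F$ below $\om_3$, the rectangular $\om_2\times\om_1$ domain, and using only one section and one ordinal value of $g_1$ --- your argument cannot be completed.
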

\proof
In our model $\cc=\om_2$ and there is no $F:\om_2\times\om_2\to 2$
with the property that for every $f:\om_2\times\om_1\to 2$
there exists $g_1:\om_2\to\om_2$ and $g_2:\om_1\to\om_2$
such that $f(\al,\be)=F(g_1(\al),g_2(\be))$ for every $\al<\om_2$
and $\be<\om_1$.

Let $M$ be a countable transitive model of ZFC + GCH.
Force with $\ctbl(\om_3)$ followed by $\fin(\om_2)$.
Let $G$ be $\ctbl(\om_3)$-generic over $M$ and
$H$ be $\fin(\om_2)$-generic over $M[G]$.  We will
show there is no $F$ in the model $N=M[G][H]$.

By standard arguments\footnote{Kunen\cite{kunen} p.253
Solovay \cite{solovay} p.10}
involving iteration and product forcing we may regard
$N$ as being obtained by forcing with $\ctbl(\om_3)^M$ over
the ground model $M[H]$.  Of course, in $M[H]$ the
poset $\ctbl(\om_3)^M$ is not countably closed but it
still must have the $\om_2$-cc.  Hence for
any $F:\om_2\times\om_2\to 2$ in $N$ we may find
$\ga<\om_3$ such that $F\in M[H][G\res\ga]$.

Use $G$ above $\ga$ to define $f:\om_2\times\om_1\to 2$, i.e.,
$$f(\al,\be)=G(\ga+\om_1\cdot\al+\be).$$
Now suppose for contradiction that in $N$ there were
$g_1:\om_2\to\om_2$ and $g_2:\om_1\to\om_2$
such that $f(\al,\be)=F(g_1(\al),g_2(\be))$ for every $\al<\om_2$
and $\be<\om_1$.
Using the $\om_2$ chain condition there would be $I\su\om_3$
in $M$ of size $\om_1$ such that $g_2\in M[H][G\res(\ga\cup I)]$.
Choose $\al_0<\om_2$ so that
$\ga\cup I$ is disjoint from $$D=\{\ga+\om_1\cdot\al_0+\be\st \be<\om_1\}.$$

It easy to see by a density argument that the function
$G\res D$ is not in $M[H][G\res(\ga\cup I)]$.  But
this is a contradiction, since $G\res D$ is easily defined
from the function $f(\al_0,\cdot)$,
$f(\al_0,\be)=F(g_1(\al_0),g_2(\be))$ for all $\be$,
and $F,g_2$ are in $M[H][G\res(\ga\cup I)]$.
\qed

\begin{ques}
Is it consistent with $2^{<\cc}>\cc$ to have a universal
function $F:2^\om \times 2^\om\to 2^\om$?  How about a Borel $F$?
\end{ques}

\begin{theorem}
Suppose MA$_{\om_1}$.  Then there exists $F:\om_1\times\om\to\om_1$ which
is universal, i.e., for every $f:\om_1\times\om\to\om_1$ there exists
$g:\om_1\to\om_1$ and $h:\om\to\om$ such that
$$f(\al,n)=F(g(\al),h(n)))\mbox{ for every } \al<\om_1\rmand n<\om .$$
\end{theorem}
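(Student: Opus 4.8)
The plan is to transpose the problem into a statement about countable families of maps on $\om_1$ and then to manufacture the common reindexing by a Martin's Axiom forcing. Writing $F^m:\om_1\to\om_1$ for $F^m(\xi)=F(\xi,m)$ and, for a given $f$, $f^n:\om_1\to\om_1$ for $f^n(\al)=f(\al,n)$, the desired identity $f(\al,n)=F(g(\al),h(n))$ becomes
\[ f^n=F^{h(n)}\circ g \qquad\mbox{for all } n<\om. \]
So the task is to build a single countable family $\{F^m:m<\om\}$ which is universal in the sense that for every countable family $\{f^n:n<\om\}$ there are one $g:\om_1\to\om_1$ and one reindexing $h:\om\to\om$ with $f^n=F^{h(n)}\circ g$ for all $n$. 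Viewing the columns $\xi\mapsto(F^m(\xi))_{m<\om}$ and $\al\mapsto(f^n(\al))_{n<\om}$ as subsets of $\om_1^\om$ of size $\om_1$, this says the fixed set of $F$-columns must cover, after a single coordinatewise reindexing by $h$, any prescribed family of $\om_1$ many $f$-columns.

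For the construction I would fix $F$ first and then, given $f$, produce $(g,h)$ by forcing. One cannot build $g$ generically: committing a value $g(\al)=\xi$ creates, for a single $\al$, an uncountable antichain as $\xi$ ranges over $\om_1$, so such a poset is not ccc and MA$_{\om_1}$ does not apply. Instead I would force only $h$ and recover $g$ afterwards, which requires the generic $h$ to satisfy, for every $\al<\om_1$,
\[ \exists\,\xi<\om_1\ \ \forall n<\om\quad F^{h(n)}(\xi)=f^n(\al). \]
The poset $\Poset$ therefore consists of a finite strictly increasing partial $h$ together with, for finitely many ``active'' ordinals $\al$, a nonempty \emph{pool} $\Xi_\al\su\om_1$ of still-possible witnesses, subject to $F^{h(n)}(\xi)=f^n(\al)$ for every $\xi\in\Xi_\al$ and every $n$ in the current domain of $h$. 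Keeping pools rather than single committed values is exactly what makes $\Poset$ ccc. Here $F$ must be designed so that these pools can always be kept alive and, in the limit along the generic, have a nonempty intersection: I would lay out the columns of $F$ along a finitely branching tree, indexed so that for each finite placement and each admissible target value the set of extending columns is infinite but coherently organized, so that either K\"onig's lemma or a pseudo-intersection (available because MA$_{\om_1}$ gives $\pp>\om_1$) produces a genuine $\xi$ through the generic. Then MA$_{\om_1}$ applied to the $\om_1$ dense sets $D_\al$ (``$\al$ becomes active'') and the countably many $E_k$ (``$h$ is defined on at least $k$ and consistently extended'') yields a total $h$ and, via the surviving pools, a total $g$ with $f^n=F^{h(n)}\circ g$.

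The hard part is to reconcile the chain condition with nonemptiness of the pools in the limit. There are two competing pressures: a single column $F^m$ takes only countably many values, so no fixed finite set of columns can realize arbitrary targets at one common new coordinate $h(n)$ (this threatens density of the $E_k$); and an $\om$-indexed decreasing chain of nonempty subsets of $\om_1$ may intersect emptily (this threatens definability of $g$). The entire weight of the argument falls on the combinatorial design of the family $\{F^m\}$, which must simultaneously guarantee (i) that any finite set of active constraints is extendable at cofinally many new coordinates and (ii) that the nested pools retain a common element in the limit. I expect (i) together with (ii), and the accompanying verification that the witness-pool poset is genuinely ccc, to be the principal obstacle; this is precisely the point at which MA$_{\om_1}$ is indispensable, both to supply the pseudo-intersections that keep the witnesses coherent and to extract the single reindexing $h$ meeting uncountably many requirements at once.
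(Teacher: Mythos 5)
You have set up the right reformulation and you have correctly located the two dangers, but the proposal stops exactly where the proof must begin: the family $\{F^m\}$ is never constructed, the chain condition for the pool poset is never verified, and the nonemptiness of the pools in the limit is never established --- indeed you say yourself that you expect these to be the principal obstacle. These are not details; as set up, they fail. If pools may be arbitrary nonempty sets of still-possible witnesses that shrink along extension, then singleton pools are legitimate conditions, and for a fixed finite $h$ and fixed $\al$ the conditions carrying pools $\{\xi\}$ for distinct $\xi<\om_1$ are pairwise incompatible, so your poset contains exactly the uncountable antichain that pools were introduced to avoid. If instead pools are required to be maximal (determined by $h$ and $\al$), the poset becomes $\si$-centered, but then nothing controls the intersection of the decreasing $\om$-sequence of pools along the generic: a decreasing $\om$-chain of nonempty subsets of $\om_1$ can perfectly well have empty intersection, and the tools you invoke do not touch this --- K\"onig's lemma needs finite branching where there is none, and $\pp>\om_1$ concerns mod-finite pseudo-intersections of subsets of $\om$, whereas your pools live on $\om_1$ and your matching requirement is exact. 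The exactness is itself the root problem: demanding $F^{h(n)}(\xi)=f^n(\al)$ at coordinates $n$ placed before $\al$ was activated forces finite sets of columns to realize arbitrary $\om_1$-valued tuples, while each row $F_\xi:\om\to\om_1$ has countable range (your remark that a column takes only countably many values is false as stated, since columns are maps $\om_1\to\om_1$, but this row version of the worry is the real obstruction).

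The paper's proof turns on two ideas absent from your outline, and they dissolve precisely these difficulties. First, reduce to $\om$-valued targets: each row $n\mapsto f(\al,n)$ has countable, hence bounded, range, so after fixing bijections $j_\be:\om\to\be$ one can collapse $f$ to a function into $\om$ and re-inflate at the end, provided the final universal function $F^\pr$ is built so that for all $\al,\be$ some row $\ga$ satisfies $F^\pr(\ga,\cdot)=j_\be\circ F(\al,\cdot)$. Second, abandon exact matching inside the forcing: start from a ZFC-provable independent family $h_\al:\om\to\om$ ($\al<\om_1$), and force with conditions $(s,E)$ where $s$ is a finite one-to-one partial map of $\om$ to $\om$ and $E\in[\om_1]^{<\om}$, the requirement $f(\al,n)=h_\al(s(n))$ being imposed only on coordinates $n$ added after $\al$ entered $E$. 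Conditions with the same $s$ are then compatible outright, so the poset is $\si$-centered; independence of the family gives density; and MA$_{\om_1}$ yields a single one-to-one $h$ with $f(\al,n)=h_\al(h(n))$ for all but finitely many $n$, for every $\al$. The finitely many errors per row are repaired outside the forcing: since $h$ is one-to-one, each $\al$ determines a corrected function $h^\pr=^*h_\al$, each $h_\al$ has only countably many mod-finite variants, and a universal $F$ whose rows enumerate all such variants satisfies $f(\al,n)=F(k(\al),h(n))$ exactly, with $k(\al)$ the index of the corrected row. There are no pools and no limit intersections anywhere in this argument; without the mod-finite relaxation and the row-closure repair (or a genuine substitute for them), your plan cannot be completed as stated.
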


\proof
There is an obvious notion of universal $F:\al\times\be\to\gamma$.
We produce a universal $F:\om_1\times\om\to\om$ and then show that
this is equivalent to the existence of a universal $F:\om_1\times\om\to\om_1$.

Standard arguments, show that there exists a family $h_\al:\om\to\om$
for $\al<\om_1$ of independent functions, i.e., for
any $n$, $\al_1<\al_2<\cdots<\al_n<\om_1$ and $s:\{1,\ldots,n\}\to\om$ there
are infinitely many $k<\om$ such that
$$h_{\al_1}(k)=s(1)$$
$$h_{\al_2}(k)=s(2)$$
$$\vdots$$
$$h_{\al_n}(k)=s(n).$$
Define $H:\om_1\times\om\to\om$ by $H(\al,n)=h_\al(n)$.
We show that $H$ is universal mod finite, in sense which will
be made clear.
Given any $f:\om_1\times\om\to\om$ define the following
poset $\poset$.  A condition $p=(s,F)$ is a pair
such that $s\in\om^{\om}$ is one-to-one
and $F\in [\om_1]^{<\om}$.
We define $p\leq q$ iff
\begin{enumerate}
\item $s_q\su s_p$,
\item $F_q\su F_p$, and
\item $f(\al,n)=h_\al(s_p(n))$
for every $\al\in F_q$ and $n\in\dom(s_p)\sm\dom(s_q)$.
\end{enumerate}
It is easy to see that $\poset$ is ccc, in fact, $\si$-centered
since any two conditions with the same $s$ are compatible.
Since the family $(h_\al:\om\to\om:\al<\om_1)$ is independent,
for any $p\in\poset$ there are extensions of $p$ with arbitrarily
long $s$ part.   It follows from MA$_{\om_1}$ that there exists
$h:\om\to\om$ with the property that for every $\al<\om_1$
for all but finitely many $n$ that $f(\al,n)=h_\al(h(n))$.

To get a universal map $F:\om_1\times\om\to\om$, simply take
any $F$ with the property that for every $\al<\om_1$
and any $h^\pr=^*h_\al$ (equal mod finite)
there is $\be$ such that $F(\be,n)=h^\pr(n)$ for every $n$.
Since the function $h$ is one-to-one, it easy to find
$k:\om_1\to\om_1$ such that $F(k(\al),h(n))=f(\al,n)$ for
all $\al$ and $n$.

Finally we show that having a universal $F:\om_1\times\om\to\om$
gives a universal $F^\pr:\om_1\times\om\to\om_1$.
For any infinite $\al<\om_1$ fix a bijection from $j_\al:\om\to\al$.
Construct $F^\pr$ with the property that for every pair
$\al,\be<\om_1$ there are uncountably many $\ga<\om_1$ such
that $F_\ga^\pr=j_\be\circ F_\al$, i.e.,
$$F^\pr(\ga,n)=j_\be(F(\al,n)) \mbox{ for all } n<\om.$$
Now we verify that $F^\pr$ is universal.  Let
$f^\pr:\om_1\times\om\to\om_1$ be arbitrary. Let
$$i_\al=\om+\sup \{f^\pr(\al,n)+1\st n<\om\}.$$
Define $f$ into $\om$ by $f(\al,n)=j_{i_\al}^{-1}(f^\pr(\al,n))$.
Since $F$ is universal there exists $g,h$ with
$$F(g(\al),h(n))=f(\al,n)=j_{i_\al}^{-1}(f^\pr(\al,n)).$$
By our definition of $F^\pr$ we may construct $g^\pr$ so
that
$$F^\pr(g^\pr(\al),h(n))=j_{i_\al}(F(g(\al),h(n)))$$
and we are done since
$$j_{i_\al}(F(g(\al),h(n)))=f^\pr(\al,n).$$
\qed

\begin{ques}
Does MA$_{\om_1}$ imply there exists $F:\om_1\times\om_1\to\om_1$ which
is universal?  Is it consistent one way or the other?
This question may be related to Shelah results on universal
graphs of size $\om_1$, see Shelah \cite{shelah,shelahsmall,shelahcorrection}.
\end{ques}

\begin{prop}
If there is a universal function $F:\ka\times\ga\to 2$ then
for every $n<\om$ there is a universal function
$F:\ka\times\ga\to n$.
\end{prop}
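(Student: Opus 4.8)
The plan is to code values in $n$ by finitely many bits and then merge the witnessing functions coordinate by coordinate using pairing on the two domains. Fix $m<\om$ with $n\le 2^m$, and identify $n$ with an initial segment of the set $2^m$ of length-$m$ binary strings (via binary expansion, so that a string codes a number $<2^m$). Fix a retraction $r:2^m\to n$, i.e.\ any map with $r\res n$ the identity (for instance, send every string coding a value $\ge n$ to $0$). The real content is to manufacture a universal function into $2^m$; composing it with $r$ then yields one into $n$, since any $G$ whose range lies in $n$ is left unchanged by $r$.

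First I would fix bijective pairing functions $P:\ka^m\to\ka$ and $Q:\ga^m\to\ga$; these exist because $\ka$ and $\ga$ are infinite, so $|\ka^m|=|\ka|$ and $|\ga^m|=|\ga|$. Write $P_i,Q_i$ for the coordinate-recovery maps, so that $P_i(P(u_0,\dots,u_{m-1}))=u_i$ and likewise for $Q$. Using the given universal $F:\ka\times\ga\to 2$, define $F':\ka\times\ga\to n$ by
$$F'(u,v)=r\bigl(\la F(P_i(u),Q_i(v))\st i<m\ra\bigr),$$
where $\la\cdot\ra$ assembles the $m$ bits into an element of $2^m$.

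To verify universality, let $G:\ka\times\ga\to n$ be arbitrary and let $G_i:\ka\times\ga\to 2$ be the $i$-th bit of $G$ under the identification $n\su 2^m$. Since $F$ is universal, for each $i<m$ there are $g_i:\ka\to\ka$ and $h_i:\ga\to\ga$ with $G_i(\al,\be)=F(g_i(\al),h_i(\be))$ for all $\al,\be$. Setting $g(\al)=P(g_0(\al),\dots,g_{m-1}(\al))$ and $h(\be)=Q(h_0(\be),\dots,h_{m-1}(\be))$, we get $P_i(g(\al))=g_i(\al)$ and $Q_i(h(\be))=h_i(\be)$, so the $i$-th bit of the string assembled inside $F'(g(\al),h(\be))$ is $F(g_i(\al),h_i(\be))=G_i(\al,\be)$. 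Hence that string is precisely $G(\al,\be)\in n$, which $r$ fixes, giving $G(\al,\be)=F'(g(\al),h(\be))$ for all $\al,\be$, as required.

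The crux is the merging of the $m$ separate witnesses $g_i,h_i$ into single functions $g,h$, and the only delicate point is the existence of the pairing functions $P$ and $Q$, which is exactly where the hypothesis that $\ka$ and $\ga$ are infinite is used; everything else is bookkeeping with the bit-coding and the retraction.
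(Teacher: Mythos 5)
Your proof is correct and uses essentially the same idea as the paper: decompose values into bits, apply universality of $F$ to each bit separately, and merge the resulting witnesses coordinatewise via pairing functions on $\ka$ and $\ga$. The paper only works out the case $n=2\times 2$ explicitly and leaves the general case implicit, whereas you handle arbitrary $n\le 2^m$ with the retraction $r$, but this is a matter of completeness rather than a different method.
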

\proof
We produce a $F^*$ which is universal for $n=2\times 2$.
For any $H_1,H_2:\ka\times\ga\to 2$
there exists $g_1,h_1,g_2,h_2$ such that
$$H_1(\al,\be)=F(g_1(\al),h_1(\be)) \rmand
H_2(\al,\be)=F(g_2(\al),h_2(\be))$$
for all $\al\in\ka$ and $\be\in\ga$.  Now define
$$F^*(\pair(\al_1,\al_2),\pair(\be_1,\be_2))=
\la F(\al_1,\be_1),F(\al_2,\be_2)\ra$$
and
$$g(\al)=\pair(g_1(\al),g_2(\al)) \rmand
h(\be)=\pair(h_1(\be),h_2(\be)).$$
Note that
$$F^*(g(\al),h(\be))=\pair(H_1(\al,\be),H_2(\al,\be)$$
for all $\al\in\ka$ and $\be\in\ga$.
\qed

\section{Higher Dimensional Universal Functions}\label{higher}

\def\cantor{{2^\om}}
\begin{define}
A $k$-dimensional universal function is a function $$F:(\cantor)^k \to \cantor$$
such that for every function $G:(\cantor)^k \to \cantor$ there is
$h:\cantor \to \cantor$ such that
$$G(x_1,x_2,\ldots,x_k) = F(h(x_1),h(x_2),\ldots,h(x_k))$$
for all $(x_1,x_2,\ldots,x_k) \in (\cantor)^k$.
\end{define}

\begin{prop}
Suppose $F(x,y)$ is a universal function, then $F(F(x,y),z)$ is
a 3-dimensional universal function.
\end{prop}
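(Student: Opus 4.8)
The plan is to peel off the two copies of $F$ one at a time, applying the universality of $F$ twice. First I would record the harmless observation that a universal $F$ is automatically universal in the ``two variable'' sense: given any $K:\cantor\times\cantor\to\cantor$ there are $g_0,g_1$ with $K(u,v)=F(g_0(u),g_1(v))$, since by Definition~\ref{defunivfcn} there is a single $g$ with $K(u,v)=F(g(u),g(v))$ and one may simply take $g_0=g_1=g$. Fix in addition a bijective pairing $\la\cdot,\cdot\ra:\cantor\times\cantor\to\cantor$, so that every real decodes uniquely as a pair.

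Let $G:(\cantor)^3\to\cantor$ be arbitrary. The first step absorbs the \emph{outer} $F$. Define $\bar G:\cantor\times\cantor\to\cantor$ by $\bar G(\la x,y\ra,z)=G(x,y,z)$; since the pairing is a bijection this specifies $\bar G$ everywhere. Two variable universality applied to $\bar G$ yields $a,b:\cantor\to\cantor$ with $\bar G(s,z)=F(a(s),b(z))$, hence
\[
G(x,y,z)=F\big(a(\la x,y\ra),\,b(z)\big)\rmforall x,y,z\in\cantor .
\]
The second step absorbs the \emph{inner} $F$. Applying two variable universality to the function $(x,y)\mapsto a(\la x,y\ra)$ produces $c,d:\cantor\to\cantor$ with $a(\la x,y\ra)=F(c(x),d(y))$, and substituting gives
\[
G(x,y,z)=F\big(F(c(x),d(y)),\,b(z)\big)\rmforall x,y,z\in\cantor .
\]
This already exhibits $F(F(x,y),z)$ as universal once a separate function is permitted in each of the three coordinates, i.e.\ in the $h_1,h_2,h_3$ formulation discussed in the abstract.

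To match the single function $h$ demanded in the definition of $k$-dimensional universal function, I would then fold $c,d,b$ into one map by the pairing device used in the introduction, passing to $h(u)=\la c(u),d(u),b(u)\ra$ and arranging, as there, that the fixed composition reads off the correct coordinate in each slot. The hard part is exactly this last reconciliation: in the introduction the reduction from different coordinate functions to a single $g$ was free to \emph{redefine} the universal function, whereas here the ambient function is rigidly $F(F(x,y),z)$ and the same value $h(u)$ must behave correctly in all three slots at once (for instance on the diagonal $x=y=z$). Thus the genuine content of the two displayed applications is the three function form above, and the step I expect to spend the most effort justifying is the extraction of a single $h$ from $c,d,b$ — equivalently, checking that for this particular $F$ the single and multiple function versions of three dimensional universality coincide.
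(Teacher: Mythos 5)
Your two applications of universality are exactly the paper's proof: the paper defines $G_0(u,z)=G(u_0,u_1,z)$ by unpairing, obtains $g,h$ with $G_0(u,z)=F(g(u),h(z))$, then $g_0,g_1$ with $g(\pair(u_0,u_1))=F(g_0(u_0),g_1(u_1))$, and concludes with $G(x,y,z)=F(F(g_0(x),g_1(y)),h(z))$ --- precisely your three-function display with $(c,d,b)=(g_0,g_1,h)$ (your preliminary remark that one may take $g_0=g_1=g$ is the easy direction of the ``without loss'' discussion, and is if anything more careful than the paper, which silently uses two functions). The ``hard part'' you reserve for last --- extracting a single $h$ from $c,d,b$ while the ambient function stays rigidly equal to $F(F(x,y),z)$ --- is not carried out in the paper either: its proof stops at the three-function form and says QED. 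The paper can afford this because it treats the one-function and several-function formulations of universality interchangeably throughout (the convention set up in the introduction, and made official in Definition~\ref{def29}, where each $Q\in\Sigma$ gets its own $h_Q$), and because the only use made of this proposition is the existence statement immediately following it: a universal function in dimension $2$ yields one in every dimension $k>1$. For that, the three-function form suffices, since separate coordinate functions can always be merged into a single one by replacing the universal function with a paired-up version as in the introduction; the repackaged function is of course no longer literally $F(F(x,y),z)$, but existence is all that is claimed downstream. So your write-up, up to and including the three-function display, already constitutes the paper's complete argument; whether the fixed function $F(F(x,y),z)$ is universal in the verbatim single-$h$ sense of the $k$-dimensional definition is a side question the paper neither answers nor needs, and you should not expect to settle it by the pairing trick, which is unavailable exactly for the reason you identify.
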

\proof
Given $G(x,y,z)$ define $G_0(u,z)=G(u_0,u_1,z)$ using unpairing,
$u=\pair(u_0,u_1)$.  By universality of $F$ there are
$g,h$ with $G_0(u,z)=F(g(u),h(z))$.  Again by universality
of $F$ there are $g_0,g_1$ with $g(\pair(u_0,u_1))=F(g_0(u_0),g_1(u_1))$
and hence $G(x,y,z)=F(F(g_0(x),g_1(y)),h(z))$.
\qed
Hence the existence of a universal function in dimension 2 is
equivalent to the existence of a universal function in dimension k
for any $k>1$.   Note however that the Baire complexity of
$F(F(x,y),z)$ is higher than that of $F$.

We may also consider universal functions $F$ where the parameters
functions are functions of more than one variable, for
example: $$\forall G\;\exists g,h,k\;\forall x,y,z\;\;\;
G(x,y,z)=F(g(x,y),h(y,z),k(z,x)).$$   Although this easily
follows from the existence of a dimension 3 universal, we
do not know if it is equivalent.
The reader will easily be able to imagine many variants.
For example,

$G(x,y,z)=F(g(x,y),h(y,z))$

$G(x_1,x_2,x_3,x_4)=F(g_1(x_1,x_2),g_2(x_2,x_3),g_3(x_3,x_4),g_4(x_4,x_1))$

\noindent where we have omitted quantifiers for clarity.
These two variants are equivalent to the existence of
2-dimensional universal function.  To see this in the
first example put $y=0$ and
get

$G(x,z)=F(g(x,0),h(0,z))$.

\noindent In the second example put $x_2=x_4=0$ and get

$G(x_1,x_3)=F(g_1(x_1,0),g_2(0,x_3),g_3(x_3,0),g_4(0,x_1))$.

\noindent
More generally, suppose $F$ and $\vec{x_k}$'s have the property that
for every $G$ there are $g_k$'s such that for all $\vec{x}$
$$G(\vec{x})=F(g_1(\vec{x}_1),\ldots,g_n(\vec{x}_n)).$$
Suppose there are two variables $x$ and $y$ from $\vec{x}$ which
do not simultaneously belong to any $\vec{x}_k$.  Then we
get a universal 2-dimensional function simply by putting
all of the other variables equal to zero.

\begin{prop}
If there  is a $(3,2)$-dimensional universal function, i.e., an $F(x,y,z)$
such that for every $G$ there is $h$ with
$$G(x,y,z)=F(h(x,y),h(y,z),h(z,x))\mbox{ all }x,y,z$$
then for every $n>3$ there is a $(n,2)$-dimensional universal
function $F$, i.e., for every $G$ n-ary there is a binary $h$ with
$$G(x_1,x_2,\ldots,x_n)=F(\la h(x_i,x_j):1\leq i<j\leq n\ra)
\mbox{ all }\vec{x}.$$
$F$ is $\comb(n,2)$-ary.
Conversely, if there is a $(n,2)$-dimensional universal
function for some $n>3$, then there is a $(3,2)$-dimensional universal
function.
\end{prop}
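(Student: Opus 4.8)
The plan is to treat the two implications separately. The converse is a ``padding and collapsing'' argument in the spirit of the reductions earlier in this section, and the forward implication is an induction on $n$ that uses the given $(3,2)$-universal $F$ once at the top and the induction hypothesis twice below. The one delicate feature common to both parts is that a single binary $h$ must be applied to \emph{every} pair $(x_i,x_j)$ and so cannot depend on the position $(i,j)$ of that pair, whereas the outer combining function is $\comb(n,2)$-ary with argument slots indexed by the edges and may therefore decode differently slot-by-slot. I would handle this throughout by letting $h$ return a fixed-length tuple (a single real, via the pairing function) that records all the binary data any edge might need, and letting the outer function read off the appropriate coordinate(s) in each slot.

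For the converse, assume $F$ is $(n,2)$-universal with $n>3$. Given a ternary $G(x,y,z)$, pad it to $G^\ast(x_1,\dots,x_n)=G(x_1,x_2,x_3)$ and extract a binary $h$ with $G^\ast=F(\la h(x_i,x_j):i<j\ra)$. Specializing $x_1=x$, $x_2=y$, $x_3=z$ and $x_4=\dots=x_n=0$, the surviving arguments of $F$ are just $h(x,y),h(x,z),h(y,z)$, the single-variable cross terms $h(x,0),h(y,0),h(z,0)$, and the constant $h(0,0)$, each appearing with a multiplicity depending only on $n$. Setting $h'(a,b)=\la h(a,b),h(b,a),h(a,0),h(b,0),h(0,0)\ra$, one checks that the three tuples $h'(x,y),h'(y,z),h'(z,x)$ jointly determine all of these values (the ordered term $h(x,z)$ being recovered from the second coordinate of $h'(z,x)$). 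A fixed $F'$ that decodes these three tuples and feeds the reconstructed list into $F$ then witnesses $(3,2)$-universality, and $F'$ depends only on $F$ and $n$, not on $G$.

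For the forward implication I would induct on $n$, with base case $n=3$ being the hypothesis. Assuming an $(n-1,2)$-universal $F^{(n-1)}$, take an $n$-ary $G$, write $u=\la x_1,\dots,x_{n-2}\ra$, and apply $F$ to the ternary map $(u,x_{n-1},x_n)\mapsto G(x_1,\dots,x_n)$ to get a binary $h_0$ with
$$G=F\bigl(h_0(u,x_{n-1}),\,h_0(x_{n-1},x_n),\,h_0(x_n,u)\bigr).$$
Now $h_0(x_{n-1},x_n)$ is already the edge value on $\{n-1,n\}$, while $h_0(u,x_{n-1})$ is an $(n-1)$-ary function of $x_1,\dots,x_{n-1}$ and $h_0(x_n,u)$ is an $(n-1)$-ary function of $x_1,\dots,x_{n-2},x_n$. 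Applying $F^{(n-1)}$ to each yields binary $h_1$ (on the edges inside $\{1,\dots,n-1\}$) and $h_2$ (on the edges inside $\{1,\dots,n-2,n\}$). Every edge of the complete graph on $\{1,\dots,n\}$ is thereby covered: those avoiding $n$ by $h_1$, those meeting $n$ but not $n-1$ by $h_2$, and $\{n-1,n\}$ by $h_0$. I would merge these into one binary $h(a,b)=\la h_0(a,b),h_1(a,b),h_2(a,b)\ra$ and define the $\comb(n,2)$-ary $F^{(n)}$ to read from each edge-slot the coordinate suited to that edge — taking both the $h_1$- and the $h_2$-coordinate on the edges inside $\{1,\dots,n-2\}$, which belong to both families — so as to reassemble the two $F^{(n-1)}$-values and the single $h_0$-value and return $F$ of that triple. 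Since $F$, $F^{(n-1)}$, and the slot-by-slot decoding are all independent of $G$, so is $F^{(n)}$.

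The main obstacle is really the bookkeeping that a single position-blind $h$ can serve all edges: the same pair of reals may have to supply $h_1$-data in one slot and $h_2$-data in another, and in the converse one needs $h(x,z)$ although only the ordered slot $(z,x)$ is presented. Both are absorbed by the tupling device, but one must verify that on the overlap edges inside $\{1,\dots,n-2\}$ the functions $h_1,h_2$ sit in distinct coordinates so that $F^{(n)}$ can recover each, and that storing both $h(a,b)$ and $h(b,a)$ resolves the ordering issue. Once this is in place, verifying $G=F^{(n)}(\la h(x_i,x_j):i<j\ra)$, and its converse analogue, is a direct substitution.
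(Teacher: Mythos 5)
Your proof is correct, and it splits into one half that matches the paper and one that genuinely does not. The converse is exactly the paper's argument: pad $G$ to an $n$-ary function, set the dummy variables to $0$, and merge the surviving values $h(x,y)$, $h(x,z)$, $h(y,z)$, $h(x,0)$, $h(y,0)$, $h(z,0)$ into a single binary function by tupling; the paper does this only for $n=4$ and compresses the merging into one sentence, so your explicit $h'$ (including the reversed coordinate that recovers $h(x,z)$ from the slot $(z,x)$) is the same argument written out for general $n$. Your forward direction, however, uses a different decomposition. The paper curries: for a $4$-ary $G$ it fixes the last variable $w$, chooses (by AC) for each $w$ a binary witness $h_w$ with $G(x,y,z,w)=F(h_w(x,y),h_w(y,z),h_w(z,x))$, then treats $(u,v,w)\mapsto h_w(u,v)$ as one ternary function and applies $F$ to it a second time, arriving at $G(x,y,z,w)=F(F(k(x,y),k(y,w),k(w,x)),F(k(y,z),k(z,w),k(w,y)),F(k(z,x),k(x,w),k(w,z)))$ with $k(s,t)$ and $k(t,s)$ paired into one function. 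You instead group $x_1,\dots,x_{n-2}$ into a single variable by pairing, apply $F$ once at the top, and recurse with $F^{(n-1)}$ on the two $(n-1)$-ary pieces, so $F$ ends up outermost rather than innermost. Both routes are inductions on $n$ and both need the tupling device so that a single position-blind $h$ serves all edges. Yours buys a tidier use of universality (three single invocations per step, with no choice of a witness family $\{h_w\}_w$ indexed by reals) and reuses the variable-grouping trick from the paper's earlier proof that $F(F(x,y),z)$ is $3$-dimensional universal; the paper's version buys that input variables are never paired, only outputs. One point you should make explicit: when your induction invokes the base case $n-1=3$, its convention lists the pairs cyclically, so the slot indexed $(1,3)$ must supply $h_1(x_3,x_1)$; hence your merged $h(a,b)$ must also store the reversed values $h_1(b,a)$, $h_2(b,a)$ — exactly the fix you already apply in the converse, and you flag this yourself, but as written your forward-direction tuple omits it.
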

\proof
Consider the case for $n=4$.

Suppose that $F$ is  $(3,2)$-dimensional
universal function.  Given a 4-ary function $G(x,y,z,w)$ for each
fixed $w$ we get a function $h_w(u,v)$ with
$$G(x,y,z,w)=F(h_w(x,y),h_w(y,z),h_w(z,x))\mbox{ for all } x,y,z.$$
But now considering $h(u,v,w)=h_w(u,v)$ we get a function
$k(s,t)$ with $h(u,v,w)=F(k(u,v),k(v,w),k(w,u))$.
Note that

\noindent $\;\;\;\;\;\;G(x,y,z,w)=$

$\;\;\;\;\;\;F(F(k(x,y),k(y,w),k(w,x)),$

$\;\;\;\;\;\;\;\;\;\;\;\;F(k(y,z),k(z,w),k(w,y)),$

$\;\;\;\;\;\;\;\;\;\;\;\;\;\;\;\;\;\;F(k(z,x),k(x,w),k(w,z))).$

\noindent Note that
$k(s,t)$ and $k(t,s)$ can
be combined by pairing and unpairing into a single function $k_1(s,t)$.
From this one can define a $(4,2)$-dimensional
universal function.

For the converse, if $F$ is a $(4,2)$-dimensional
universal function, then for every $G$ 3-ary, there exists
$h$ binary with
$$G(x,y,z)=F(h(x,y),h(y,z),h(x,z),h(x,0),h(y,0),h(z,0)).$$
But note that, for example, $h(x,y)$ and $h(x,0)$ can
be combined into a single function of $h_1(x,y)$.  Hence
we can get a $(3,2)$-dimensional
universal function.
\qed

Next we state a generalization of these ideas:

\begin{define}\label{def29}
Suppose $\Sigma\su \pow(\{0,1,2,\ldots,n-1\})=\pow(n)$ (the power set of $n$).
Define
$U(\ka,n,\Sigma)$ to mean that there exists
$F:\ka^{\Sigma}\to\ka$ such that
for every $G:\ka^n\to\ka$ there are $h_Q:\ka^{|Q|}\to\ka$
for $Q\in\Sigma$
such that
$$G(x_0,x_1,\ldots,x_{n-1})=F\left(h_Q(x_j:j\in Q):Q\in\Sigma\right)
\mbox{ for all } \vec{x}\in \ka^n.
$$
\end{define}

Then the last two propositions can be generalized to show:

\begin{prop}\label{propmn}
For any infinite cardinal $\ka$ and positive integer $n$
\begin{enumerate}
\item $U(\ka,n+1,[n+1]^n)$ implies $\forall m>n\;\; U(\ka,m,[m]^n)$.
\item ($\exists m>n\;\; U(\ka,m,[m]^n)$) implies $U(\ka,n+1,[n+1]^n)$.
\item $U(\ka,n+1,[n+1]^n)$ implies $U(\ka,n+2,[n+2]^{n+1})$
\end{enumerate}
\end{prop}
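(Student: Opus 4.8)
The plan is to generalize the two concrete propositions (the $(3,2)$-to-$(n,2)$ result and its companion) to the $n$-ary setting, treating each of the three items separately but reusing the same mechanisms: \emph{freezing a coordinate} to pass to lower arity, and \emph{nesting} the universal function to build higher arity from a fixed template. Throughout, the abbreviation $[m]^n$ denotes the family of $n$-element subsets of $\{0,1,\ldots,m-1\}$, so $U(\ka,n+1,[n+1]^n)$ is exactly the statement that an $F$ on the $(n+1)$ many $n$-fold coordinate-projections is universal for $G:\ka^{n+1}\to\ka$ using a single binary-to-$n$-ary template $h$.

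For item (2), the easy direction, I would argue exactly as in the converse half of the $(3,2)$ proposition. Assume $U(\ka,m,[m]^n)$ for some $m>n$. Given an arbitrary $G:\ka^{n+1}\to\ka$, extend it to an $m$-ary function by ignoring the extra $m-(n+1)$ inputs, and then set all those extra coordinates equal to $0$. Applying universality of the $m$-ary $F$ yields a collection of functions $h_Q$ indexed by $Q\in[m]^n$; for each such $Q$, those coordinates in $Q$ that fall outside $\{0,\ldots,n\}$ are pinned to $0$, so $h_Q$ collapses to a function of at most $n$ of the first $n+1$ variables. Several of these collapsed functions share the same support among the first $n+1$ coordinates, so by a pairing/unpairing bookkeeping step (just as $k(s,t)$ and $k(t,s)$ were merged) they can be amalgamated into a single template $h$ on each $Q\in[n+1]^n$, giving $U(\ka,n+1,[n+1]^n)$.

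For item (1), the forward direction, I would induct on $m$, the base case $m=n+1$ being trivial. Assuming $U(\ka,n+1,[n+1]^n)$, I build an $(m,2)$-style function by the nesting idea from the $n=4$ computation: given $G:\ka^m\to\ka$, freeze the last variable $x_{m-1}$ to obtain, for each value, an $(m-1)$-ary function, apply the inductive hypothesis to get templates depending on $n$-subsets of the first $m-1$ coordinates, then treat the dependence on $x_{m-1}$ by a second application of the $(n+1,n)$ universal $F$, composing the two layers of $F$. The bookkeeping shows the resulting expression for $G$ is a nested $F$-composition whose innermost functions each depend on exactly $n$ of the $x_i$; pairing together those innermost functions that share the same $n$-element support yields a single $h_Q$ for each $Q\in[m]^n$, which is the desired $U(\ka,m,[m]^n)$. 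Item (3) is the special case bridging consecutive arities: $U(\ka,n+1,[n+1]^n)$ gives $U(\ka,m,[m]^n)$ for every $m>n$ by item (1), and in particular for $m=n+2$ one then fixes coordinate $x_{n+1}=0$ to reduce each $n$-subset template to an $(n+1)$-ary function and amalgamates, yielding $U(\ka,n+2,[n+2]^{n+1})$; alternatively one reads it off directly from the nesting in item (1).

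The main obstacle I expect is the combinatorial bookkeeping in item (1): after the nested composition of $F$, the innermost template functions are indexed by ordered information (which coordinate plays which role in each inner copy of $F$), and one must verify that merging functions with a common unordered $n$-element support via pairing never conflates distinct data and that the supports range over \emph{all} of $[m]^n$. This indexing—confirming each $Q\in[m]^n$ receives exactly one well-defined $h_Q$ after amalgamation, with no $n$-subset missed or double-counted—is the crux; the underlying algebra is a routine iteration of the $n=4$ display, but the clean statement for general $n$ requires care in the pairing conventions and in tracking which frozen coordinate is reintroduced at each induction step.
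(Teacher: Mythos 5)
Your proposal is correct and takes essentially the same approach as the paper, which gives no separate proof of this proposition beyond the remark that the two preceding propositions generalize: your item (2) is the paper's set-the-extra-variables-to-zero-and-amalgamate-by-pairing argument, and your item (1) is the paper's freeze-the-last-variable, nest-$F$, merge-by-pairing argument carried out by induction on $m$. The only blemish is the first mechanism you offer for item (3): fixing $x_{n+1}=0$ \emph{lowers} arity (it is the item (2) device) and cannot turn $n$-ary templates into $(n+1)$-ary ones; what is needed there is the opposite, trivial promotion --- regard each $n$-ary template $h_Q$ produced by item (1) at $m=n+2$ as an $(n+1)$-ary template with a dummy variable on some $Q'\supseteq Q$ with $Q'\in[n+2]^{n+1}$, and pair together those assigned to the same $Q'$. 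Your alternative route for (3) is correct and cleaner: the intermediate stage of the inductive step of (1), before the inner copy of $F$ is applied, already exhibits $G(\vec{x})=F\bigl(h_{Q\cup\{n+1\}}(x_j : j\in Q\cup\{n+1\}) : Q\in[n+1]^n\bigr)$, which is $U(\ka,n+2,\Sigma)$ for $\Sigma\subseteq[n+2]^{n+1}$, hence $U(\ka,n+2,[n+2]^{n+1})$ after adding an ignored slot for $\{0,\dots,n\}$.
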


\def\minf{\,\cdot\,}

\begin{define}\label{defU}
Define $U(\ka,n)$ to be any of the equivalent
$U(\ka,m,[m]^n)$ for $m>n$.  Note that $n$ is the arity of
the inside parameter functions, the arity of the universal function
is less important.
\end{define}

We will show that $U(\ka, n)$ are the only generalized
multi-dimensional universal functions properties.
Clause (3) says that $U(\ka, n)$ implies $U(\ka, {n+1})$
and we will show that none of these
implications can be reversed.

\begin{prop}\label{sig}
Let $\ka$ be an infinite cardinal, $n\geq 2$, and $\Sigma,\Sigma_0,\Sigma_1$
subsets of $\pow(n)$.
\begin{enumerate}
\item If $\Sigma_0\su \Sigma_1$ , then
$U(\ka,n,\Sigma_0)$ implies $U(\ka,n,\Sigma_1)$.
\item If $Q_0\su Q_1\in\Sigma$, then
$U(\ka,n,\Sigma)$ is equivalent to $U(\ka,n,\Sigma\cup\{Q_0\})$.
\item Suppose $\Sigma$ is closed under taking subsets,
every element of $n$ is in some element of $\Sigma$, and
$n=\{0,1,2,\ldots,n-1\}\notin\Sigma$.  Let $m+1$ be the size
of the smallest subset of $n$ not in $\Sigma$.
Then $U(\ka,n,\Sigma)$ is equivalent to $U(\ka,m)$.
\end{enumerate}
\end{prop}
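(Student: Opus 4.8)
The plan is to handle the three clauses in order, using the earlier combinatorial machinery (pairing/unpairing and the ``fill in zeros'' trick from the higher-dimensional section) as the basic tools. For clause (1), the idea is that enlarging the index family $\Sigma$ only gives the existential player more parameter functions to work with, so universality is easier to achieve. Given a witness $F:\ka^{\Sigma_0}\to\ka$ for $U(\ka,n,\Sigma_0)$, I would define $F':\ka^{\Sigma_1}\to\ka$ that simply ignores the coordinates indexed by $\Sigma_1\sm\Sigma_0$ and applies $F$ to the remaining $\Sigma_0$-coordinates. For any $G$, take the $h_Q$ ($Q\in\Sigma_0$) that witness $U(\ka,n,\Sigma_0)$, and for $Q\in\Sigma_1\sm\Sigma_0$ let $h_Q$ be arbitrary (say identically $0$); then $F'$ reproduces $G$, establishing $U(\ka,n,\Sigma_1)$.

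For clause (2), one direction is immediate from clause (1) since $\Sigma\su\Sigma\cup\{Q_0\}$. For the converse, suppose $U(\ka,n,\Sigma\cup\{Q_0\})$ holds with witness $F$; I want to remove the redundant coordinate $Q_0$, using that $Q_0\su Q_1$ with $Q_1\in\Sigma$. The key observation is that any function $h_{Q_0}:\ka^{|Q_0|}\to\ka$ of the variables $(x_j:j\in Q_0)$ can be recovered from a suitable function of the variables $(x_j:j\in Q_1)$, since $Q_0\su Q_1$; so I would pair the intended $Q_1$-parameter together with the $Q_0$-parameter into a single $Q_1$-indexed function via the pairing function $\pair(,)$, and have the new universal function unpair them and feed both components into $F$. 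This shows $U(\ka,n,\Sigma)$, giving the equivalence.

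Clause (3) is where the real content lies, and I expect it to be the main obstacle. The hypotheses force $\Sigma$ to be a downward-closed family covering $n$ but omitting $n$ itself, with $m+1$ the least size of a missing subset; equivalently $\Sigma$ contains every subset of size $\le m$ but misses some subset of size $m+1$. The target is to show $U(\ka,n,\Sigma)$ is equivalent to $U(\ka,m)$, i.e.\ to $U(\ka,m+1,[m+1]^m)$ by Definition \ref{defU}. For the direction $U(\ka,n,\Sigma)\Rightarrow U(\ka,m)$, I would fix a missing $(m+1)$-element set $S\notin\Sigma$, set all coordinates outside $S$ equal to $0$, and observe that the surviving parameter functions each depend on at most $m$ of the $S$-variables (since $S\notin\Sigma$ but every proper subset of $S$, having size $\le m$, is in $\Sigma$); this exhibits a $(m+1,[m+1]^m)$-universal function, exactly as in the ``two variables not simultaneously in any block'' argument of the higher-dimensional section. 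For the reverse direction $U(\ka,m)\Rightarrow U(\ka,n,\Sigma)$, I would combine clause (1) — to replace $\Sigma$ by the full family $[n]^{\le m}$ of all subsets of size at most $m$, which is legitimate because $\Sigma$ contains all such subsets — with an iterated composition argument generalizing the $(4,2)$-to-$(3,2)$ construction: a $(m+1,[m+1]^m)$-universal function can be bootstrapped, by nesting copies of $F$ and using pairing to amalgamate the several $m$-ary parameter functions on overlapping index sets into single functions, to yield a universal function for $n$-ary $G$ with all $m$-ary parameter blocks. The bookkeeping of which blocks to nest and how to merge the parameters via $\pair(,)$ is the delicate part, but it is routine in the same style as Proposition \ref{propmn}.
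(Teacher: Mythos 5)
Your proposal is correct and follows essentially the same route as the paper's proof: clause (1) by ignoring the extra coordinates, clause (2) by pairing $h_{Q_0}$ together with $h_{Q_1}$ into a single $Q_1$-indexed parameter, and clause (3) by zeroing the variables outside a minimal missing set $R\notin\Sigma$ of size $m+1$ for one direction, and invoking Proposition \ref{propmn} (to pass from $U(\ka,m)$ to $U(\ka,n,[n]^m)$) together with clause (1) for the other. The only slip is cosmetic: the bootstrapping you describe is the $(3,2)$-to-$(4,2)$ direction of Proposition \ref{propmn}, not the $(4,2)$-to-$(3,2)$ reduction, but the construction you outline (nesting copies of $F$ and amalgamating parameters by pairing) is exactly the right one.
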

\proof
$\;\;\;\;$
(1) This is true because the $F$ which works for $\Sigma_0$ also works
for $\Sigma_1$ by ignoring the values of $h_Q$ for $Q\in \Sigma_1\sm\Sigma_0$.

(2) This is true because given $h_{Q_0},h_{Q_1}$ we may define
a new $\hat{h}_{Q_1}$ by outputting the pairing
$$\hat{h}_{Q_1}( x_j:j\in Q_1)=
\la (h_{Q_0}(x_j:j\in Q_0),(h_{Q_1}(x_j:j\in Q_1)\ra$$

(3)
Choose $R\su\{0,1,\dots n-1\}$ not in $\Sigma$ with $|R|=m+1$.
By choice of $m$ all subsets of $R$ of size $m$ are in $\Sigma$.
By setting $x_i=0$ for $i\notin R$, we see that
 $U(\ka, m)$ is true.

Now assume  $U(\ka, m)$ is true.
By Proposition \ref{propmn} we have that $U(\ka,n,\Sigma_0)$ is true
where $\Sigma_0=[n]^m$.
But $\Sigma_0\su \Sigma$ and so by part (1),  $U(\ka,n,\Sigma)$
is true.
\qed

\begin{remark}
For any $n$ and $\Sigma\su \pow(n)$ if 
$\bigcup\Sigma\neq n=\{0,1,2,\ldots,n-1\}$
then  $U(\ka,n,\Sigma)$ is trivially false.  If
$n\in\Sigma$, then $U(\ka,n,\Sigma)$ is trivially true.
If neither of these is true, then by the Proposition \ref{sig}
there exists $m$ with
$U(\ka,n,\Sigma)$ equivalent to $U(\ka, m)$.
\end{remark}

\begin{prop} \label{prop33}
The following are true in ZFC.
\begin{enumerate}
\item $U(\om, 1)$
\item $U(\om_1, 2)$
\item $U(\ka, 1)$ implies $U(\ka^+, 2)$
\item $U(\ka, n)$ implies $U(\ka^+, {n+1})$
\item $U(\om_n, {n+1})$ every $n\geq 0$.
\end{enumerate}
\end{prop}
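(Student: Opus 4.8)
The plan is to derive the whole proposition from clauses (1) and (4); the remaining clauses are then bookkeeping. Clause (1), $U(\om,1)$, is just the existence of a universal $F:\om\times\om\to\om$, which is Theorem~\ref{existuniv} applied to $\ka=\om$ (since $2^{<\om}=\om$), i.e. Remark~\ref{remarkomega}. Granting (4), clause (3) is the instance $n=1$; clause (2), $U(\om_1,2)$, is (4) applied with $\ka=\om$, $n=1$ (using $\om^+=\om_1$); and clause (5) follows by induction on $n$, the base $n=0$ being (1) and the step $U(\om_n,n+1)\to U(\om_{n+1},n+2)$ being (4) with $\ka=\om_n$. So the real content is the stepping-up lemma (4), $U(\ka,n)\Rightarrow U(\ka^+,n+1)$.

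To prove (4) I would unfold the definitions through Proposition~\ref{propmn} and Definition~\ref{defU}, writing the hypothesis as $U(\ka,n+1,[n+1]^n)$ and the goal as $U(\ka^+,n+2,[n+2]^{n+1})$. So I am given $F:\ka^{[n+1]^n}\to\ka$ witnessing universality on $\ka$ with $n$-ary inner functions, and I must build $F':(\ka^+)^{[n+2]^{n+1}}\to\ka^+$ witnessing universality on $\ka^+$ with $(n+1)$-ary inner functions. Fix once and for all, for each $\de<\ka^+$, an injection $e_\de:\de+1\to\ka$ (possible since $|\de+1|\le\ka$). The guiding idea is a ``max-coordinate'' reduction: given $\vec\al=(\al_0,\dots,\al_{n+1})\in(\ka^+)^{n+2}$, let $p$ be the position of the maximum (breaking ties by largest index) and $\de=\al_p$; then every coordinate lies in $\de+1$, so $e_\de$ pushes the whole tuple into $\ka$. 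The crucial point of the arity bump is that an $(n+1)$-element $R\su n+2$ with $p\in R$ is exactly ``the max coordinate together with an $n$-element set $Q=R\sm\{p\}$ of the remaining coordinates'': such an $R$ can compute $\de$, apply $e_\de$ to the coordinates in $Q$, and evaluate the inner function $\bar h_Q$ coming from factoring the reduced problem through the given $\ka$-universal $F$. As $R$ ranges over the $n+1$ subsets of size $n+1$ containing $p$, the sets $Q=R\sm\{p\}$ range over all of $[n+1]^n$ on the non-$p$ coordinates, so every inner value demanded by $F$ is produced; the single subset omitting $p$ is used only to help $F'$ reconstruct $\vec\al$ (via a pairing function on $\ka^+$), locate $p,\de$, and apply $F$.

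The step I expect to be the main obstacle is the output: $G'$ takes values in $\ka^+$, whereas the given $F$ only delivers values in $\ka$. Here I would use regularity of $\ka^+$. For fixed $(\de,p)$ the reduced slice $\bar G_{\de,p}\colon\ka^{n+1}\to\ka^+$ (namely $G'$ with the max pinned at position $p$, value $\de$, and the other coordinates pulled back through $e_\de$) has domain of size $\le\ka$, hence range of size $\le\ka$, hence range bounded by some $\ga_{\de,p}<\ka^+$. I then compose the output with $e_{\ga_{\de,p}}$ to obtain $\tilde G_{\de,p}\colon\ka^{n+1}\to\ka$, which is what I actually factor through $F$. The observation that makes this work is that $\ga_{\de,p}$ depends only on $(\de,p)$ (and on $G'$), not on the minor coordinates, so it can be emitted as part of the $\ka^+$-valued output of any inner function $h'_R$ with $p\in R$. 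Thus $F'$, after reconstructing $\vec\al$ and locating $(\de,p)$, reads off $\ga_{\de,p}$ together with the inner $F$-values from the subsets containing $p$, computes $m=F(\dots)\in\ka$, and returns $e_{\ga_{\de,p}}^{-1}(m)=G'(\vec\al)$.

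Two bookkeeping points remain. First, ties: since any $R$ that I use contains the global max position $p$ and omits only one coordinate, $p$ is still the largest-index coordinate attaining $\de$ inside $R$, so each such $R$ recomputes the same $(\de,p)$ that $F'$ reads globally; hence the local reductions are mutually consistent, and distinctness of coordinates is never needed. Second, all the ``raw data'' ($\vec\al$, the reduced inner values, and $\ga_{\de,p}$) are packed into the single $\ka^+$-valued output of each $h'_R$ using the pairing function on $\ka^+$ and unpacked by the fixed $F'$. With these in hand the identity $G'(\vec\al)=F'\big(h'_R(\al_j:j\in R):R\big)$ is a routine unwinding of the reduction.
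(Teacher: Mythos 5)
Your proof is correct and is essentially the paper's own argument: the paper proves clause (2) in detail by exactly your max-coordinate reduction (enumerating $\ga+1$ to pull the sub-maximal coordinates down to $\om$, bounding the relevant slice of $G$ by $k(\ga)<\om_1$ to push the output down, and factoring the reduced problem through the $\om$-universal function, with the auxiliary data passed via pairing), and then leaves clauses (3)--(5) to the reader. Your write-up just carries out that same mechanism uniformly as the general stepping-up lemma (4) --- your $e_\de$, $\ga_{\de,p}$, and appeal to regularity of $\ka^+$ playing the roles of the paper's enumerations and $k(\ga)$ --- and derives (2), (3), (5) from (1) and (4) by routine bookkeeping, which is what the paper intends.
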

\proof
For (1) see Remark \ref{remarkomega}.
We prove (2) and leave 3-5 to the reader.

Suppose that $f:\om^2\to\om$ witnesses
$U(\om,2,1)$.  For any countable ordinal $\de>0$ let
$\de=\{\de_i:i<\om\}$. Define
$$F_0(\de,n,m)=\de_{f(n,m)}.$$
Now suppose $G:\om_1^3\to\om_1$.  Define
$$k(\de)=\sup\{G(\al,\be,\ga)\;:\;\al,\be,\ga\leq\de\}+1$$
For any $\ga<\om_1$ let $\ga^*=k(\ga)$.
Define $g:\om^2\to\om$ by
$$G((\ga+1)_n,(\ga+1)_m,\ga))=\ga^*_{g(n,m)}.$$
By the universality property of $f$ there exists
$h:\om\to\om$ with
$$g(n,m)=f(h(n),h(m)) \mbox{ for every } n,m\in\om.$$
For $\de\leq \ga$
define
$h_1(\de,\ga)=h(k)$ where $\de=(\ga+1)_k$.   Then we have
that
$$\forall \al,\be\leq\ga<\om_1\;\;\;
G(\al,\be,\ga)=F_0(k(\ga),h_1(\al,\ga),h_1(\be,\ga)).$$

Define $F$ as follows:

\medskip
\noindent
$F(\al,\be,\ga,\al^*,\be^*,\ga^*,n_1,m_1,n_2,m_2,n_3,m_3)=$
$$
\left\{
\begin{array}{ll}
F_0(\ga^*,n_1,m_1) & \rmif \al,\be\leq\ga \\
F_0(\be^*,n_2,m_2) & \rmif \ga < \be \rmand \al\leq\be \\
F_0(\al^*,n_3,m_3) & \rmif \be,\ga < \al \\
\end{array}\right.
$$

Then given $G$ we can find $k,h_1,h_2,h_3$ so that

\noindent $G(\al,\be,\ga))=$

$F(\al,\be,\ga,k(\al),k(\be),k(\ga),$

$\;\;\;\;\;\;\;\;\; h_1(\al,\ga),h_1(\be,\ga),\;\;
h_2(\al,\be),h_2(\ga,\be),\;\; h_3(\be,\al),h_3(\ga,\al)).
$

\qed

The $\ka$-Cohen real model is any model of ZFC obtained
by forcing with the poset of finite partial functions from $\ka$ to
2 over a countable transitive ground model satisfying ZFC.

\begin{prop}\label{prop34}
In the $\om_2$-Cohen real model
we have that
$U(\om_1,1)$ fails.  Similarly, $U(\om_2,2)$
fails in the $\om_3$-Cohen real model.
More generally, we have that
$U(\ga,n)$ fails in the $\ka$-Cohen real model
when $\ka>\ga\geq \om_n$.
\end{prop}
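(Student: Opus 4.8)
The plan is to refute $U(\ga,n)$ by producing, in the $\ka$-Cohen extension $N=V[G]$ (with $G$ being $\mathrm{Fn}(\ka,2)$-generic over a ground model of GCH), a single target function that no inside functions can represent. I would use the equivalent presentation $U(\ga,n+1,[n+1]^n)$ from Definition~\ref{defU}: a putative universal $F:\ga^{[n+1]^n}\to\ga$ must satisfy, for every $G:\ga^{n+1}\to\ga$, the equation $G(x_0,\dots,x_n)=F((h_{Q}(x_j:j\in Q))_{Q\in[n+1]^n})$ for suitable $h_Q:\ga^n\to\ga$, where the sets $Q$ are the complements of singletons, $Q_i=(n+1)\sm\{i\}$. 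Since $\dom(F)$ has size $\ga$, the $\ka$-cc lets me fix $A_F\su\ka$ with $|A_F|=\ga$ and $F\in V_1:=V[G\res A_F]$; a permutation automorphism of the forcing lets me assume $A_F$ is an initial segment, so that all coordinates in $[\ga,\ka)$ are Cohen over $V_1$. Reserving pairwise disjoint countable blocks $E_{\vec a}\su[\ga,\ka)$ indexed by the $n$-element sets $\vec a=\{a_0<\dots<a_{n-1}\}\in[\ga]^n$ (only $\ga$ of them, and $\ka>\ga$ coordinates are free), I let $r_{\vec a}\in\Reals$ be the Cohen real read off $E_{\vec a}$ and define the bad target by $G(a_0,\dots,a_{n-1},t)=r_{\vec a}(t)$ for $t<\om$ and $G=0$ otherwise.

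Next I would analyze hypothetical witnesses $h_Q$. Evaluating the representation along the fibre coding $r_{\vec a}$ gives $r_{\vec a}(t)=F\!\left(h_{Q_n}(\vec a),\,(h_{Q_i}(\vec a\sm a_i,t))_{i<n}\right)$ for all $t<\om$, since the face $Q_n=\{0,\dots,n-1\}$ ignores the last coordinate while each $Q_i$ ($i<n$) contains it. Thus $r_{\vec a}$ is computed inside $V_1$ from three ingredients: $F$ itself; one harmless ordinal $h_{Q_n}(\vec a)<\ga$, already in $V_1$; and, for each $i<n$, the countable \emph{face datum} $S^i_{\vec a\sm a_i}:=\la h_{Q_i}(\vec a\sm a_i,t):t<\om\ra$, depending only on the $(n-1)$-element subface $\vec a\sm a_i$. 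Writing $C^i_s\su\ka$ for a countable coordinate set on which $S^i_s$ is computed, I conclude $r_{\vec a}\in V_1[G\res\bigcup_{i<n}C^i_{\vec a\sm a_i}]$.

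The heart of the argument is a free-set count calibrated to $\ga\ge\om_n$. With the blocks $E_{\vec a}$ fixed in advance and the countable sets $C_s:=\bigcup_{i<n}C^i_s$ produced only afterwards, I would define the set mapping $f:[\ga]^{n-1}\to[\ga]^{\le\aleph_0}$ by $f(s)=\{\xi<\ga\st \xi\notin s \rmand E_{s\cup\{\xi\}}\cap C_s\neq\emptyset\}$; disjointness of the blocks makes each $f(s)$ countable. Because $\ga\ge\om_n=\aleph_n$, the Kuratowski--Sierpi\'nski free-set theorem for countably valued set mappings of dimension $n-1$ supplies a finite free set $H$ of size $\ge n$, i.e.\ $\xi\notin f(s)$ whenever $s\in[H]^{n-1}$ and $\xi\in H\sm s$. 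Taking a cell $\vec a\in[H]^n$ and applying freeness to each face $s=\vec a\sm a_i$ with omitted point $a_i$ yields $E_{\vec a}\cap C_{\vec a\sm a_i}=\emptyset$ for every $i<n$, so $E_{\vec a}$ is disjoint from $\bigcup_{i<n}C^i_{\vec a\sm a_i}$. By mutual genericity of the product forcing, $r_{\vec a}$ is then Cohen over $V_1[G\res\bigcup_i C^i_{\vec a\sm a_i}]$ and hence not in it, contradicting the previous paragraph.

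The main obstacle is isolating and correctly calibrating the free-set theorem: I must verify that the threshold for finite free sets of countably valued set mappings on $[\ga]^{n-1}$ is exactly $\aleph_n$ (the ``$+1$ shift'' from the classical finite-valued version), and I would check this directly in low dimensions. For $n=1$ the mapping degenerates to a single countable set $C_\emptyset$ and the free cells are just the $\vec a$ whose block avoids it, available since $\ga\ge\aleph_1$ --- this $n=1$, $\ga=\om_1$, $\ka=\om_2$ case is the clean template and should be written first. For $n=2$ a diagonal counterexample on $\om_1$ (the regressive $f(\al)=\al$) shows $\aleph_1$ is insufficient, matching that $U(\om_1,2)$ is a theorem of ZFC by Proposition~\ref{prop33}, while $\ga\ge\aleph_2$ delivers free pairs by Hajnal's set-mapping theorem. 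A secondary, routine obstacle is the forcing bookkeeping: reducing to $A_F$ an initial segment, choosing the $E_{\vec a}$ disjoint and disjoint from $A_F$, and invoking that a coordinate block disjoint from $X$ stays generic over $V_1[G\res X]$. The stated instances $U(\om_1,1)$ in the $\om_2$-Cohen model and $U(\om_2,2)$ in the $\om_3$-Cohen model are exactly the $n=1,2$ cases with $\ga=\om_n$, $\ka=\om_{n+1}$.
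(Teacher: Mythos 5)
Your proposal is correct, but it reaches the conclusion by a genuinely different route than the paper. The paper's proof is an asymmetric ``peeling'' argument: the bad target is the generic itself read off a box $\om\times\om_1\times\cdots\times\om_n$ with values in $2$, and the ccc is applied successively --- first to capture $F$ in a submodel, then to find a column (in the largest coordinate) missed by the support of the inside function omitting that coordinate (an object of size $\om_{n-1}$ facing $\om_n$ columns), then inside that column a row missed by the countable trace of the next inside function, and so on --- until a single Cohen real remains that is generic over a model containing every ingredient the representation would use to compute it; the strictly increasing chain $\om<\om_1<\cdots<\om_n\le\ga$ is exactly what powers the successive captures. You instead keep the target symmetric (Cohen reals $r_{\vec a}$ read off pairwise disjoint countable blocks indexed by $[\ga]^n$), note that each representation of $r_{\vec a}$ factors through $n$ face data with countable supports depending only on the $(n-1)$-element faces of $\vec a$, and then apply the Kuratowski--Sierpi\'nski free-set theorem to the induced countably-valued set mapping on $[\ga]^{n-1}$ to find one cell whose block avoids all of its faces' supports; mutual genericity of product Cohen forcing then finishes just as in the paper. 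Your calibration of the free-set theorem is exactly right (free sets of size $k+1$ for set mappings $[\ka]^k\to[\ka]^{<\lambda}$ iff $\ka\ge\lambda^{+k}$; here $\lambda=\aleph_1$, $k=n-1$, threshold $\aleph_n$), and the forcing bookkeeping (names with supports of size $\ga$ for $F$ and countable supports for the face data, blocks chosen in the complement, restriction of Cohen forcing as a product) is standard and sound; one could even skip the automorphism step by simply placing the blocks inside $\ka\sm A_F$. As for what each approach buys: yours is uniform in $n$ (the paper proves the $n=2$ instance and leaves the lifting to the reader), it modularizes the combinatorial core into a classical theorem, and it makes transparent that the bound $\om_n$ in the proposition is precisely the free-set threshold, dovetailing with the positive ZFC results of Proposition \ref{prop33}; the paper's argument is self-contained --- its peeling is in effect an inline proof of the needed instance of the free-set phenomenon --- and it yields the marginally stronger conclusion that universality fails already for targets defined on the small box $\om\times\om_1\times\cdots\times\om_n$ with values in $2$.
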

\proof
We show that $U(\om_2,2)$
fails in the $\om_3$-Cohen real model, leaving the rest
to the reader.

Let $M$ be a countable transitive model of ZFC and
in $M$ define $\poset$ to be the poset of finite partial
maps from $\om_3\times\om_3\times\om_3$ into 2.  We claim that if
$G$ is $\poset$-generic over $M$, then there is
no map $F:\om_2\times\om_2\times\om_2\to \om_2$
which is (3,2)-universal for maps
of the form $H:\om\times\om_1\times\om_2\to 2$.

Suppose for contradiction that $F$ is such a map.
By the ccc we may find $\ga_0<\om_3$ with $F\in M[G\res\ga_0^3]$.
Hence we may find maps $h_1:\om\times\om_1\to\om_3$,
$h_2:\om\times\om_2\to\om_3$, and
$h_3:\om_1\times\om_2\to\om_3$ such that
$$H(n,\be,\ga)=^{def}G(n,\be,\ga_0+\ga)=F(h_1(n,\be),h_2(n,\ga),h_3(\be,\ga)).$$
for every $n<\om,\be<\om_1,\ga<\om_2$.
 By ccc we can choose
$\ga_1<\om_2$ such that $h_1\in M[G^*]$ where
$G^*$ is $G$ restricted $\{(\al,\be,\rho)\in\om^3\;:\;\rho\neq \ga_0+\ga_1\}$.
Define $g:\om\times\om_1\to 2$ by
$$g(n,\al)=G(n,\al,\ga_0+\ga_1)$$
Note that we have that $F,h_1\in M[G^*]$, $g$ is
Cohen generic over $M[G^*]$, and
$$g(n,\al)=F(h_1(n,\al),h_2(n,\ga_0+\ga_1),h_3(\al,\ga_0+\ga_1)).$$

Since the extension by $g$ is ccc, we may find $\al_0<\om_1$
such that
$$h_2\in M[G^*][g\res (\om\times\al_0)]=^{def}N.$$
But this is
a contradiction because $g_{\al_0}$ defined by $g_{\al_0}(n)=g(n,\al_0)$
is Cohen generic over $N$.
But $F,h_1,h_2\in N$ and for any $\ga_2<\om_2$ the map $k$ defined by
$$k(n)=F(h_1(n,\al_0),h_2(n,\ga_0+\ga_1),\ga_2)
\mbox{ for all } n<\om$$
is in $N$ and so can never be equal to $g_{\al_0}$.
Thus $h_3(\al_0,\ga_0+\ga_1)=\ga_2$ cannot be defined.
\qed

\begin{cor}
Let  $\aleph_\om\leq\ga<\ka$.  In the
$\ka$-Cohen real model we have that
$$U(\om_n,n+1)+\neg U(\om_n,n) \mbox{ for all }n>0,$$
and $$\neg U(\ga,n) \mbox{ for all } n>0 .$$
\end{cor}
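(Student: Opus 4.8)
The plan is to observe that this corollary is simply a repackaging of Propositions \ref{prop33} and \ref{prop34}, so the entire task reduces to lining up the hypotheses of those two results against the standing assumption $\aleph_\om \leq \ga < \ka$. First I would dispose of the positive assertion: the statement $U(\om_n, n+1)$ for all $n > 0$ is precisely clause (5) of Proposition \ref{prop33}, which is proved there in ZFC. Since the $\ka$-Cohen real model is a model of ZFC, this assertion holds there automatically, with no further argument and no appeal to the particular value of $\ka$ or $\ga$.

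For the two negative assertions I would invoke the general form of Proposition \ref{prop34}, namely that $U(\de, m)$ fails in the $\ka$-Cohen real model whenever $\ka > \de \geq \om_m$. To obtain $\neg U(\om_n, n)$ I apply this with $\de = \om_n$ and $m = n$, so that the hypothesis becomes $\ka > \om_n \geq \om_n$; since $\om_n < \aleph_\om \leq \ga < \ka$ for every finite $n$, the needed inequality $\ka > \om_n$ is immediate. To obtain $\neg U(\ga, n)$ I apply the same proposition with $\de = \ga$ and $m = n$, where now the hypothesis is $\ka > \ga \geq \om_n$: here $\ka > \ga$ is the standing assumption, and $\ga \geq \aleph_\om \geq \om_n$ holds because $\aleph_\om = \sup_k \om_k$ strictly dominates each finite $\om_n$. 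Both applications run for every $n > 0$.

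The only point requiring any care, and the closest thing to an obstacle, is checking that the single hypothesis $\aleph_\om \leq \ga < \ka$ is strong enough to feed both families of applications uniformly. Because $\aleph_\om$ strictly exceeds every $\om_n$ with $n$ finite, this one assumption yields $\ka > \om_n$ and $\ga \geq \om_n$ simultaneously for all $n$, which is exactly what is needed to invoke Proposition \ref{prop34} across the whole range $n > 0$ at once. Beyond this bookkeeping there is no genuine difficulty and no new construction is required.
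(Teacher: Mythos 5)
Your proposal is correct and matches the paper's intent exactly: the corollary is stated without proof precisely because it is the immediate combination of Proposition \ref{prop33}(5) (a ZFC theorem, hence true in any Cohen model) with the general form of Proposition \ref{prop34} ($U(\ga,n)$ fails in the $\ka$-Cohen real model when $\ka>\ga\geq\om_n$), instantiated just as you describe. Your bookkeeping that $\aleph_\om>\om_n$ for all finite $n$ supplies both required inequalities, so nothing is missing.
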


\begin{remark}
If we start with a model $M_1$ of GCH and force with the countable
partial functions from $\ka=\aleph_{\om+1}$ into $2$ then in
the resulting model $M_2$, we have CH and so
$U(\om_1,1)$ (Theorem \ref{existuniv}).
We get $U(\om_n,n)$  by Propositions \ref{prop33}.  By
an argument similar to Proposition \ref{prop34}
but raised up one cardinal, we have $\neg U(\om_n,n-1)$
for $n\geq 2$.  If we then add $\ka=\om_3$ Cohen reals to $M_2$ to
get $M_3$, then we will have in $M_3$ that $|2^\om|=\om_3$
and $\neg U(\om_3,2)$ by argument of Proposition \ref{prop34}
lifted by one cardinal.   $U(\om_3,4)$ is true in ZFC
by Proposition \ref{prop33}.
This leaves the obvious gap question.
\end{remark}

\begin{define}
In the case of Borel universal functions of higher dimensions,
we use $U({\rm Borel},n)$ to mean
the analogous thing as in Definition \ref{defU} only we require
that the universal map $F$ be Borel.
\end{define}

\begin{prop}
The following are true:
\begin{enumerate}
\item $U(Borel, n)$ implies $U(Borel, n+1)$
\item $U(Borel, \Sigma, n)$ is equivalent to
$U(Borel, m)$ for $m+1$ the size of the smallest
subset of $n$ not in the downward closure of $\Sigma$.
\end{enumerate}
\end{prop}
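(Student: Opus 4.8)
The plan is to adapt the ZFC-level arguments for the abstract higher-dimensional universal functions (Propositions~\ref{sig} and \ref{propmn}) to the Borel setting, simply checking at each step that the constructions preserve Borel-ness. The two clauses are independent, so I would handle them separately.

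For clause (1), I would mimic the proof that $U(\ka,n)$ implies $U(\ka,n+1)$, which is clause (3) of Proposition~\ref{propmn}. The key observation from Section~\ref{higher} is that passing from an $n$-dimensional universal function to an $(n+1)$-dimensional one is accomplished by a composition: given a $(3,2)$-universal $F$ one produces a $(4,2)$-universal function of the form $F(F(\cdots),F(\cdots),F(\cdots))$, and likewise in general the witnessing function for dimension $n+1$ is built by composing $F$ with itself finitely many times, together with pairing functions. Since a finite composition of Borel functions is Borel, and the pairing/unpairing maps on $2^\om$ are continuous (hence Borel), the resulting universal function is again Borel. The inner parameter functions $h_Q$ need not be Borel, so there is nothing to check on that side; only the top-level $F$ must stay Borel, and it does. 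So first I would isolate exactly the composition formula used in the abstract proof of Proposition~\ref{propmn}(3) and then verify Borel-ness term by term.

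For clause (2), I would run the three parts of Proposition~\ref{sig} and track complexity. Part~(1) there works by having a single $F$ serve for a larger $\Sigma$, ignoring extra coordinates, so the same Borel $F$ works. Part~(2) combines two parameter functions by pairing their outputs; since the pairing happens inside the parameter functions $h_Q$ (which are unrestricted) and $F$ is only reindexed by a continuous coordinate rearrangement, the Borel complexity of $F$ is unchanged. Part~(3) reduces $U(\ka,n,\Sigma)$ to $U(\ka,m)$ by the substitution $x_i=0$ for $i\notin R$ in one direction (which replaces $F$ by a continuous restriction, preserving Borel-ness) and by invoking Proposition~\ref{propmn} together with part~(1) in the other direction. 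Thus I would reprove the equivalence of Proposition~\ref{sig}(3) in the Borel category, using the already-Borel clause~(1) of the present proposition in place of the abstract Proposition~\ref{propmn}.

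I expect the main obstacle to be bookkeeping rather than any genuine difficulty: one must confirm that every place where the abstract proofs use an arbitrary bijection or an arbitrary selection (for instance, the choice of witnessing functions, or enumerations of ordinals) either lands inside the unrestricted inner parameter functions $h_Q$ or can be replaced by a continuous map on $2^\om$, so that the top-level universal function never inherits non-Borel behavior. The delicate point is that in the abstract setting the complexity of the universal map is explicitly allowed to grow under composition (as noted after the proof of the $3$-dimensional proposition, ``the Baire complexity of $F(F(x,y),z)$ is higher than that of $F$''), so clause~(1) should be read as asserting the existence of \emph{some} Borel universal function of the next dimension, not one of the same level. Once this is understood, each verification reduces to the closure of the Borel functions under finite composition and continuous reindexing, and the proof is a routine transcription of the ZFC arguments with a Borel annotation attached to each step.
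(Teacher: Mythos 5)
Your proposal is correct and takes essentially the same approach as the paper: the paper's entire proof is the one-line observation that compositions of Borel functions are Borel and that pairing and unpairing functions are continuous, and your write-up simply carries out in detail the transcription of Propositions~\ref{propmn} and~\ref{sig} that this observation licenses. Your caveat that clause~(1) asserts only the existence of \emph{some} Borel universal function in the next dimension (since Baire level can grow under composition) is exactly the point the paper itself makes in the remark preceding the proposition.
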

\proof
The composition of Borel functions is Borel,
and pairing and unpairing functions are continuous.
\qed

We can further refine $U({\rm Borel},n)$
in the special case that our universal
function $F$ is a level $\al$ Borel function.
Since the composition of level $\al$-functions is
not necessarily level $\al$, i.e., $F(F(x,y),z)$ need  be at
the level $\al$ just because $F$ is.  Hence it is not immediately
obvious that the binary case of the next proposition implies
the n-ary case.  The proof here is similar to that
of Rao \cite{rao}.

\begin{prop}\label{proprao}
Assume Martin's Axiom.  Then for every
$n>1$ there is a level 2 Borel function $F:(2^\om)^n\to 2^\om$
which is universal, i.e., for every $G:(2^\om)^n\to 2^\om$
there exists $h_i:2^\om\to 2^\om$ such that for every $x$ in $(2^\om)^n$
$$G(x_1,\ldots,x_n)=F(h_1(x_1),\ldots,h_n(x_n))$$
\end{prop}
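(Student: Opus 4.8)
The plan is to build $F$ directly for each arity $n$, deliberately avoiding the composition $F(F(\cdots),\cdot)$ used in the earlier propositions, since that raises the Borel level and is exactly why the binary case does not formally yield the $n$-ary case. I use two consequences of Martin's Axiom: that $\cc$ is regular, and that every subset of $2^\om$ of cardinality $<\cc$ is a $Q$-set, i.e.\ each of its subsets is relatively $F_\si$. I also fix a universal $F_\si$ set $U\su 2^\om\times 2^\om$, so that the sections $U_y$ range over \emph{all} $F_\si$ subsets of $2^\om$ as $y$ varies. As in the proof of Theorem~\ref{mainthm}, it suffices to realize an arbitrary $G:\cc^n\to 2^\om$ after identifying a fixed $X=\{x_\ga:\ga<\cc\}\su 2^\om$ with $\cc$; since $G$ is determined by the countably many relations $S_k=\{\vec\al:G(\vec\al)(k)=1\}\su\cc^n$, the task is to realize all the $S_k$ simultaneously with a single level $2$ gadget.

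The heart of the argument is a case split on the ordering of the coordinates, in the spirit of Proposition~\ref{prop33}. For a fixed permutation $\pi$, consider the region where $\al_{\pi(1)}<\cdots<\al_{\pi(n)}$ and let $\ga=\al_{\pi(n)}$ be the maximal coordinate. Because $\cc$ is regular, the set of tuples lying strictly below $\ga$ has cardinality $<\cc$, so on this region the assignment $\vec\al\mapsto G(\vec\al)$ is, for each fixed $\ga$, a function from a set of size $<\cc$ into $2^\om$. Using the $Q$-set property I encode this entire table by a single real placed at the maximal coordinate: I define the value $h(\ga)$ so that, via $U$, its section $U_{h(\ga)}$ is the relatively $F_\si$ subset of $X$ recording, for each $k$, which lower tuples satisfy $S_k$. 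The non-maximal coordinates then contribute reals that merely index into this table, and the lookup "$(x_{\al_{\pi(1)}},\ldots)\in U_{h(\ga)}$" is an $F_\si$ condition, so computing $G(\vec\al)(k)$ on this region is level $2$.

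Finally I would amalgamate the finitely many regions. Following Theorem~\ref{mainthm}, I combine the per-region functions by pairing the coordinate reals and taking a pointwise maximum (the regions are disjoint, so this is really a case selection). A finite boolean combination of ${\bf\Sigma}^0_2$ conditions is again ${\bf\Sigma}^0_2$, so $F$ stays at level $2$; this is precisely the step that fails for the composition used in the previous propositions. Universality then follows by assembling the per-region witnesses into single functions $h_1,\ldots,h_n$, exactly as in the earlier proofs.

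The main obstacle is the uniform level-$2$ coding of the table lookups: I must arrange a single real $h(\ga)$ on the maximal coordinate whose $U$-section simultaneously decides membership in every $S_k$ for every tuple below $\ga$, and the indexing reals on the lower coordinates must select the correct section coordinate without ever forcing a $G_\delta$-over-$F_\si$ alternation. Controlling this bookkeeping while keeping all intermediate sets $F_\si$ — essentially the content of Rao's argument \cite{rao} — is the delicate part, and it is the regularity of $\cc$ together with the $Q$-set property furnished by Martin's Axiom that make it possible.
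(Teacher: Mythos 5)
Your overall skeleton — split $\cc^n$ into regions according to which coordinate is maximal, and at that maximal coordinate use the Q-set property plus a universal $F_\si$ set to code the table of $G$-values on the (size $<\cc$) set of lower tuples — is the same as the paper's, but your amalgamation step has a genuine gap, and it sits exactly where the real content of the paper's proof lies. The function $F$ sees only the reals $h_i(x_i)$; the regions are defined by the ordering of the hidden ordinals $\al_1,\ldots,\al_n$, and nothing in your construction lets $F$ recover that ordering. You appeal to the pointwise-maximum trick of Theorem \ref{mainthm}, but that trick works there only because the per-region functions $f_1,f_2$ are realized \emph{globally}: the special set $X$ built in that proof (from the hypothesis that every subset of the plane lies in the $\si$-algebra of rectangles) has the property that every small subset of $X$ is Borel relative to \emph{all} of $X$, so each region's lookup genuinely returns $\vec{0}$ off its region. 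Under MA you have only the Q-set property for sets of size $<\cc$: it gives an $F_\si$ set whose intersection with the \emph{small} set of reals indexed below $\ga$ is the desired table, but on reals $g(\al)$ with $\al>\ga$ the section $U_{h(\ga)}$ is completely uncontrolled. Consequently, for a tuple lying in some other region, your region-$\pi$ lookup returns garbage rather than $\vec{0}$, and the maximum over regions need not equal $G(\vec{\al})$. ``The regions are disjoint'' is a statement about subsets of $\cc^n$; it gives $F$, which sees only reals, no mechanism for selecting the correct case.

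The missing ingredient is an $F_\si$-detectable coding of the ordinal order, and this is precisely what the paper's proof supplies: under MA there is a scale $k:\cc\to\om^\om$, i.e.\ $\al<\be$ implies $k(\al)(n)<k(\be)(n)$ for all but finitely many $n$, and eventual dominance in $\om^\om$ is a ${\bf\Sigma}^0_2$ condition. Making each $h_i$ carry $k(\al_i)$ alongside the table and index reals, $F$ can decide by a ${\bf\Sigma}^0_2$ test which coordinate is maximal and dispatch to that region's lookup; a finite union of conjunctions of ${\bf\Sigma}^0_2$ conditions is still ${\bf\Sigma}^0_2$, so the assembled $F$ remains level 2. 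Without the scale (or some substitute performing the same role) the construction cannot be completed. A minor further point: regularity of $\cc$ is not what makes the set of tuples below $\ga$ small — that follows just from $\ga<\cc$; what MA contributes beyond the Q-set property is exactly the scale you omitted.
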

\proof

For simplicity we prove it for $n=3$. Let
$$D_1=\{(\al,\be,\ga)\st \al,\be\leq\ga<\cc\}$$
Let $F\su 2^\om\times 2^\om\times 2^\om$ be
an $F_\si$ set with the property that for every $F_\si$ set
$H\su  2^\om\times 2^\om$ there exists $z\in 2^\om$ with
$$H=F_z=^{def}\{(x,y)\st (x,y,z)\in F\}.$$

Let $g:\cc\to 2^\om$
be a 1-1 map.
Recall that Martin's Axiom implies that every set $X\su 2^\om$ with
$X|<\cc$ is a $Q$-set, i.e., every $Y\su X$ is a relative
$F_\si$.  This is due to Silver and can be found in any
standard treatment of Martin's Axiom.
Thus given any $A\su D_1$ we can find $h_1:\cc\to 2^\om$
with
the property that for every $\al,\be\leq\ga$
$$(\al,\be,\ga)\in A\rmiff (g(\al),g(\be),h_1(\ga))\in F$$

Similarly let
$$D_2=\{(\al,\be,\ga)\;:\; \al,\ga\leq \be<\cc\}
\rmand
D_3=\{(\al,\be,\ga)\;:\; \ga,\be\leq \al<\cc\}$$
Now given any $A\su D_2$ or $A\su D_3$.
and obtain $h_2$, and $h_3$ with the
analogous property.

Note that we may determine which case ($D_i$)
in an $F_\si$ way as follows.  Let $k:\cc\to \om^\om$ be
a scale, i.e., if $\al<\be$ then $k(\al)(n)<k(\be)(n)$
for all but finitely many $n<\om$.   Such an object exists
assuming Martin's axiom.   We claim now that there exists
an $F_\si$ predicate $H$ with the property that for
all $A\su \cc\times\cc\times\cc$ there exists
$h:\cc\to 2^\om$ such that
$$\forall \al,\be,\ga\;\; (\al,\be,\ga)\in A \rmiff
(h(\al),h(\be),h(\ga))\in H.$$
To see how to do this note that the function $k$ can tell
us which case we are in $D_1$, $D_2$, or $D_3$.  Then the
function $h$ codes up $k$ and the $h_1,h_2,h_3$.

Similarly, to the proof of Theorem \ref{mainthm}, we get
and $F:2^\om\times 2^\om \times 2^\om\to 2^\om$ which
is a level 2 Borel map which is universal.

\qed

\bigskip

\addresspaul

\addressarn

\addressjuris

\addressbill

\end{document}